\numberwithin{equation}{section}
\numberwithin{figure}{section}
\newtheorem{theorem}{Theorem}[section]
\newtheorem{proposition}[theorem]{Proposition}
\newtheorem{lemma}[theorem]{Lemma}
\newtheorem*{assumption}{Assumption}
\theoremstyle{definition}
\theoremstyle{remark}
\newtheorem{remark}[theorem]{Remark}
\DeclarePairedDelimiter{\norm}{\lVert}{\rVert}
\newcommand{\N}{\mathbb{N}}
\newcommand{\R}{\mathbb{R}}
\newcommand{\C}{\mathbb C}
\renewcommand{\leq}{\leqslant}
\renewcommand{\geq}{\geqslant}
\DeclareMathAlphabet{\mathpzc}{OT1}{pzc}{m}{it}
\renewcommand{\Re}{\mathcal R\!\mathpzc{e}}
\renewcommand{\Im}{\mathcal I\!\mathpzc{m}}
\begin{document}

\title[On the derivative nonlinear Schr\"odinger equation on the half line with Robin boundary condition]{On the derivative nonlinear Schr\"odinger equation on the half line with Robin boundary condition}

\author[Phan Van Tin]{Phan Van Tin}

\address[Phan Van Tin]{Institut de Math\'ematiques de Toulouse ; UMR5219,
  \newline\indent
  Universit\'e de Toulouse ; CNRS,
  \newline\indent
  UPS IMT, F-31062 Toulouse Cedex 9,
  \newline\indent
  France}
\email[Phan Van Tin]{van-tin.phan@univ-tlse3.fr}

\subjclass[2020]{35Q55; 35B10; 35B35}

\date{\today}
\keywords{Nonlinear derivative Schr\"odinger equations, standing waves, stability, blow up, singularly pertubed}

\begin{abstract}
We consider the Schr\"odinger equation with nonlinear derivative term on $[0,+\infty)$ under Robin boundary condition at $0$. Using a virial argument, we obtain the existence of blowing up solutions and using variational techniques, we obtain stability and instability by blow up results for standing waves. 
\end{abstract}

\maketitle
\tableofcontents

%%%%%%%%%%%%%%%%%%%%%%%%%%%%%%

\section{Introduction}

In this paper, we consider the derivative nonlinear Schr\"odinger equation on $[0,+\infty)$ with Robin boundary condition at 0: 
\begin{equation}\label{eq first}
\begin{cases}
iv_t+v_{xx}=\frac{i}{2}|v|^2v_x - \frac{i}{2}v^2\overline{v_x}-\frac{3}{16}|v|^4v \quad \text{ for } x \in \R_{+}, \\
v(0) = \varphi, \\
\partial_x v(t,0) = \alpha v(t,0)\quad \forall t\in\R,
\end{cases}
\end{equation}    
where $\alpha \in \R$ is a given constant.

The linear parts of \eqref{eq first} can be rewritten in the following forms:
\begin{equation}\label{linearequation}
\begin{cases}
iv_t+\widetilde{H}_{\alpha}v=0 \text{ for } x \in \R_{+},\\
v(0)=\varphi,
\end{cases}
\end{equation}
where $\widetilde{H}_{\alpha}$ are self-adjoint operators defined by
\begin{align*}
\widetilde{H}_{\alpha} &:  D(\widetilde{H}_{\alpha})\subset L^2(\R_{+})\rightarrow L^2(\R_{+}),\\
\widetilde{H}_{\alpha}u &= u_{xx},\quad  D(\widetilde{H}_{\alpha})=\left\{u \in H^2(\R_{+}) : u_x(0^{+})=\alpha u(0^{+}))\right\}.
\end{align*}
We call $e^{i \widetilde{H}_{\alpha} t}: \R \rightarrow \mathcal{L}(L^2(\R^{+}))$ is group defining the solution of \eqref{linearequation}.   
 
The derivative nonlinear Schr\"odinger equation was originally introduced in Plasma Physics as a simplified model for Alfvén wave propagation. Since then, it has attracted a lot of attention from the mathematical community (see e.g \cite{CoKeStTaTa01,CoKeStTaTa02,Ha93,HaOz92,He06,KaNe78,Ta01,Ta16}). 

Consider the equation \eqref{eq first}, and set
\[
u(t,x)=\exp\left(\frac{3i}{4}\int_{\infty}^x|v(t,y)|^2\, dy\right)v(t,x).
\]
Using the Gauge transformation, we see that $u$ solves
\begin{equation}\label{eqnew}
iu_t+u_{xx}=i\partial_x(|u|^2u), \quad t \in \R, ,\ x \in (0,\infty),
\end{equation}
under a boundary condition $\partial_x u(t,0)=\alpha u(t,0)+\frac{3i}{4}|u(t,0)|^2u(t,0)$. In all line case, there are many papers to deal with Cauchy problem of \eqref{eqnew} (see e.g \cite{HaOz94,TsFu80,TsFu81}). In \cite{HaOz94}, the authors establish the local well posedness in $H^1(\R)$ by using a Gauge transform. Indeed, since $u$ solves \eqref{eqnew} on $\R$, by setting
\begin{align}
h(t,x)&=\exp\left(-i\int_{-\infty}^x|u(t,y)|^2\,dy\right)u(t,x), \nonumber \\
k&= h_x+\frac{i}{2}|h|^2h, \label{12} 
\end{align}      
we have $h,k$ solve
\begin{equation}
\label{eqq}
\begin{cases}
ih_t+h_{xx}= -ih^2\overline{k},\\
ik_t+k_{xx}=ik^2\overline{h}.
\end{cases}
\end{equation}
By classical arguments, we can prove that there exists a unique solution $h,k \in C([0,T],L^2(\R)) \cap L^4([0,T],L^{\infty}(\R))$ given $h_0,k_0 \in L^2(\R)$ are satisfy \eqref{12}. To obtain the existence solution of \eqref{eq first}, the authors prove that the relation \eqref{12} satisfies for all $t \in  [0,T]$. Thus, since $h,k$ solve \eqref{eqq} satisfy \eqref{12}, if we set 
\[
u(t,x)=\exp\left(i\int_{-\infty}^x|h(t,y)|^2\, dy\right)h(t,x),
\]
then $u \in C([0,T],H^1(\R))$ solves \eqref{eq first}. In \cite{bahouri2020global}, the authors have proved the global well posedness of \eqref{eqnew} given initial data in $H^{\frac{1}{2}}(\R)$. In half line case, \cite{Wu13} Wu  prove existence of blow up solution of \eqref{eqnew} under Dirichlet boundary condition, given initial data in $\Sigma := \{u_0 \in H^2(\R^{+}), xu_0 \in L^2(\R^{+})\}$. In this paper, we give a proof of existence of blow up solution of \eqref{eq first} under Robin boundary condition.     
 
To study equation \eqref{eq first}, we start by the definition of solution on $H^1(\R^{+})$. Since \eqref{eq first} contains a Robin boundary condition, the notion of solution in $H^1(\R^{+})$ is not completely clear. We use the following definition. Let $I$ be an open interval of $\R$. We say that $v$ is a $H^1(\R^{+})$ solution of the problem \eqref{eq first} on $I$ if $v \in C(I,H^1(\R^{+}))$ satisfies the following equation
\begin{equation}
\label{midsolution of pb 2}
v(t)= e^{i\widetilde{H}_{\alpha}t}\varphi-i\int_0^t e^{i\widetilde{H}_{\alpha}(t-s)}g(v(s)) \ ds,
\end{equation} 
where $g$ is the function defined by
\[
g(v)= \frac{i}{2}|v|^2v_x-\frac{i}{2}v^2\overline{v}_x-\frac{3}{16}|v|^4v.
\]

%We will need the following assumption.

%\begin{assumption}\label{Assumption A} We assume that for all $\varphi \in H^1(\R^{+})$ there exists a unique maximal solution $v \in C(I,H^1(\R^{+}))$ of \eqref{eq first}, where $I$ is maximal interval existence of solution. Moreover, $v$ satisfies the blow up alternative property.
%\end{assumption}

Let $v \in C(I,D(\widetilde{H}_{\alpha}))$ be classical solution of \eqref{eq first}. At least formally, we have
\begin{align*}
\frac{1}{2} \partial_t(|v|^2)=-\partial_x\Im(v_x\overline{v}).
\end{align*}
Therefore, using the Robin boundary condition we have 
\begin{align*}
\partial_t\left(\frac{1}{2}\int_0^{\infty}|v|^2 \, dx\right) &= - \Im(v_x\overline{v})(\infty) + \Im(v_x\overline{v})(0) \\
&= \Im(v_x\overline{v})(0) \\
&= \alpha\Im(|v(0)|^2) \\
&=0.
\end{align*}
This implies the conservation of the mass. By elementary calculations, we have
\begin{align*}
\partial_t\left(|v_x|^2-\frac{1}{16}|v|^6\right)&= \partial_x\left(2\Re(v_x\overline{v}_t)-\frac{1}{2}|v|^2|v_x|^2+\frac{1}{2}v^2\overline{v}_x^2\right).
\end{align*}
Hence, integrating the two sides in space, we obtain
\begin{align*}
\partial_t \left(\int_{\R_{+}}|v_x|^2\,dx-\frac{1}{16}|v|^6\,dx\right)&=-2\Re(v_x(0)\overline{v}_t(0))+\frac{1}{2}|v(0)|^2|v_x(0)|^2-\frac{1}{2}v(0)^2\overline{v_x(0)}^2 
\end{align*}
Using the Robin boundary condition for $v$, we obtain
\begin{align*}
\partial_t \left(\int_{\R_{+}}|v_x|^2\,dx-\frac{1}{16}|v|^6\,dx\right)&=-2\alpha\Re(v(0)\overline{v_t(0)})=-\alpha\partial_t(|v(0)|^2).
\end{align*}
This implies the conservation of the energy.

In this paper, we will need the following assumption.

\begin{assumption}\label{assumptionB}
We assume that for all $\varphi \in H^1(\R^{+})$ there exist a solution $v \in C(I,H^1(\R^{+}))$ of \eqref{eq first} for some interval $I \subset \R$. Moreover, $v$ satisfies the following conservation law:
\begin{align*}
M(v)&:=\frac{1}{2}\norm{v}^2_{H^1(\R^{+})}=M(\varphi),\\
E(v)&:=\frac{1}{2}\norm{v_x}^2_{L^2(\R^{+})}-\frac{1}{32}\norm{v}_{L^6(\R^{+})}+\frac{\alpha}{2}|v(0)|^2.
\end{align*}  
\end{assumption}

The existence of blowing up solutions for classical nonlinear Schr\"odinger equations was considered by Glassey \cite{Gl77} in 1977. He introduced a concavity argument based on the second derivative in time of $\norm{xu(t)}^2_{L^2}$ to show the existence of blowing up solutions. In this paper, we are also interested in studying the existence of blowing-up solutions of \eqref{eq first}. In the limit case $\alpha=+\infty$, which is formally equivalent to Dirichlet boundary condition if we write $v(0)=\frac{1}{\alpha}v'(0)=0$. In \cite{Wu13}, Wu proved the blow up in finite time of solutions of \eqref{eq first} with Dirichlet boundary condition and some conditions on the initial data. Using the method of Wu \cite{Wu13} we obtain the existence of blowing up solutions in the case $\alpha>0$, under a weighted space condition for the initial data and negativity of the energy. Our first main result is the following.

\begin{theorem}\label{thm blow up}
We assume assumption \ref{assumptionB}. Let $\alpha>0$ and $\varphi \in \Sigma$ where 
\[
\Sigma = \left\{ u \in D(\widetilde{H}_{\alpha}), xu \in L^2(R_{+}))\right\}
\]
such that $E(\varphi)<0$.
%Then the associated solution of equation \eqref{eq first} satisfies the conservation laws of the mass and the energy as follows:
%\begin{align*}
%M(v) &:= \frac{1}{2}\norm{v}^2_{L^2(\R^{+})} = M(\varphi) %\quad for \, all \, t \in (T_{min},T_{max}), \\
%E(v) &:= \frac{1}{2} \norm{v_x}^2_{L^2(\R^{+})} -\frac{1}{32}\norm{v}^6_{L^6(\R^{+})}+\frac{\alpha}{2}|v(0)|^2 = E(\varphi).
%\end{align*}
Then the solution $v$ of \eqref{eq first} blows-up in finite time i.e $T_{min}>-\infty$ and $T_{max}<+\infty$. 
\end{theorem}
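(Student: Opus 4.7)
My strategy is the classical Glassey concavity argument built on the variance functional, adapted to the half-line with Robin condition along the lines of Wu's treatment of the Dirichlet case \cite{Wu13}.

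\emph{Step 1 (weighted moment).} Introduce
\[
V(t) := \int_0^\infty x^2 |v(t,x)|^2 \, dx.
\]
Since $\varphi \in \Sigma \subset D(\widetilde H_\alpha)$, Assumption \ref{assumptionB} together with persistence of regularity yields a classical solution, so it is legitimate to differentiate $V$ in $t$. I would first show that $xv(t)\in L^2(\R^+)$ on the maximal interval of existence by a Gronwall estimate applied to $\frac{d}{dt}\int x^2 |v|^2 \, dx$ once $V'$ is computed, using mass conservation; this guarantees $V\in C^2(I)$.

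\emph{Step 2 (virial identities).} Using the pointwise local law $\tfrac12\partial_t|v|^2=-\partial_x\Im(v_x\overline v)$ already derived in the introduction, integration by parts (the factor $x^2$ kills the boundary term at $0$ and the $H^1$-decay together with $xv\in L^2$ kills it at infinity) yields
\[
V'(t) = 4\int_0^\infty x\,\Im(v_x\overline v)\,dx.
\]
Differentiating once more, substituting $v_t$ from \eqref{eq first}, integrating by parts repeatedly, and replacing $v_x(t,0)$ by $\alpha v(t,0)$ each time the Robin condition appears, I expect to reach an identity of the form
\[
V''(t) = 16\,E(v(t)) + \mathcal B\bigl(v(t,0)\bigr),
\]
where $\mathcal B$ is a real polynomial in $|v(t,0)|^2$ gathering all boundary contributions (the linear part from $v_{xx}$, the quintic term $-\tfrac{3}{16}|v|^4v$, and the derivative nonlinearity $\tfrac{i}{2}(|v|^2v_x-v^2\overline{v_x})$, which all collapse to functions of $|v(t,0)|^2$ after Robin). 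The whole point of the assumption $\alpha>0$ is that the signs line up so that $\mathcal B(v(t,0))\leq 0$. Combined with energy conservation, one then gets
\[
V''(t) \leq 16\,E(v(t)) = 16\,E(\varphi).
\]

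\emph{Step 3 (Glassey conclusion).} Since $E(\varphi)<0$, the previous bound is strict and integrates to
\[
0 \leq V(t) \leq V(0) + tV'(0) + 8\,E(\varphi)\,t^2.
\]
The right-hand side is a downward parabola whose roots bracket a bounded symmetric interval of times; outside that interval it is strictly negative, which is impossible as long as the solution persists. Hence the maximal interval of existence $I=(T_{\min},T_{\max})$ must satisfy $T_{\max}<+\infty$ and $T_{\min}>-\infty$.

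\textbf{Main obstacle.} The delicate point is Step 2. The derivative nonlinearity produces several boundary contributions at $x=0$ upon integration by parts, and they must be combined with the linear term $-2\Re(v_x(0)\overline{v_t(0)})$ already encountered in the energy computation, then reduced via the Robin relation $v_x(0)=\alpha v(0)$. Extracting exactly $16E(v)$ from the bulk and checking that the residual boundary polynomial $\mathcal B$ is nonpositive precisely when $\alpha>0$ is the crucial bookkeeping step; it is here, and only here, that the sign of $\alpha$ is genuinely used.
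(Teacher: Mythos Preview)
Your overall plan is right: this is a Glassey/virial argument and the sign condition $\alpha>0$ is used exactly where you say. The problem is Step~2. The identity you anticipate,
\[
V''(t)=16E(v(t))+\mathcal B\bigl(|v(t,0)|^2\bigr),
\]
with $\mathcal B$ a pure boundary polynomial, is \emph{not} what comes out for this equation. If you differentiate $V'(t)=4\,\Im\int_0^\infty x\,v_x\overline v\,dx$ and substitute the equation, the derivative nonlinearity $\tfrac{i}{2}(|v|^2v_x-v^2\overline{v_x})$ produces genuine \emph{bulk} integrals of the type $\int_0^\infty x\,\partial_x(|v|^2)\,\Im(v_x\overline v)\,dx$ that do not integrate by parts into boundary data and have no sign. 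So the bookkeeping you flag as the ``main obstacle'' actually fails, not because the signs are hard to check, but because the residual is not a boundary term at all.

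The paper gets around this by a gauge transform: set $u=v\exp\bigl(-\tfrac{i}{4}\int_x^\infty |v|^2\bigr)$, which converts \eqref{eq first} into $iu_t+u_{xx}=i|u|^2u_x$ with the nonlinear Robin condition $u_x(0)=\alpha u(0)+\tfrac{i}{4}|u(0)|^2u(0)$. In these variables one gets the \emph{first} derivative
\[
I'(t)=4J(t)-\int_0^\infty x|u|^4\,dx, \qquad J(t)=\Im\int_0^\infty x\,u_x\overline u\,dx,
\]
so the troublesome bulk term appears one level lower, already carries a good sign, and can simply be dropped: $I'(t)\le 4J(t)$. A separate computation gives
\[
J'(t)=2\!\int_0^\infty|v_x|^2-\tfrac18\!\int_0^\infty|v|^6+\alpha|v(0)|^2=4E(v)-\alpha|v(0)|^2\le 4E(\varphi),
\]
where $\alpha>0$ is used. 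Integrating twice then yields $I(t)\le I(0)+4J(0)t+8E(\varphi)t^2$ and the conclusion follows. In other words, the correct structure is a two-step inequality $I'\le 4J$, $J'\le 4E(\varphi)$, rather than a single clean bound on $V''$. Your Step~3 is fine once Step~2 is replaced by this.
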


\begin{remark}\label{remarkonconservationlaw}
%\item[(1)] We can prove that if $\norm{\varphi}^2_{L^2(\R^{+})} <\pi$ then the associated solution $v$ of \eqref{eq first} is bounded in $H^1(\R^{+})$ by conservation laws and the Gagliardo-Nirenberg inequality. This implies that the Cauchy problem \eqref{eq first} is globally wellposed in $H^1(\R^{+})$ when the initial data is small enough in $L^2(\R^{+})$.
In \eqref{eq first}, if we consider nonlinear term $i|v|^2v_x$ instead of $\frac{i}{2}|v|^2v_x-\frac{i}{2}v^2\overline{v_x}-\frac{3}{16}|v|^4v$ then there is no conservation of energy of solution. Indeed, set
\[
u(t,x)=v(t,x) \exp \left(-\frac{i}{4}\int_{\infty}^x|v(t,y)|^2\, dy\right).
\]     
If $v$ solves 
\begin{equation*}
\begin{cases}
iv_t+v_{xx}=i|v|^2v_x,\\
\partial_x v(t,0)=\alpha v(t,0)
\end{cases}
\end{equation*} 
then $u$ solves
\begin{equation}\label{eqofu}
\begin{cases}
iu_t+u_{xx}=\frac{i}{2}|u|^2u_x-\frac{i}{2}u^2\overline{u_x}-\frac{3}{16}|u|^4u,\\
\partial_x u(t,0)=\alpha u(t,0)-\frac{i}{4}|u(t,0)|^2u(t,0).
\end{cases}
\end{equation}
By elementary calculations, since $u$ solves \eqref{eqofu}, we have
\begin{align*}
\partial_t\left(|u_x|^2-\frac{1}{16}|u|^6\right) &= \partial_x\left(2\Re(u_x\overline{u_t})-\frac{1}{2}|u|^2|u_x|^2+\frac{1}{2}u^2\overline{u_x}^2\right).
\end{align*} 
Integrating the two sides in space, we obtain
\[
\partial_t\left(\int_{\R^{+}}|u_x|^2-\frac{1}{16}|u|^6\,dx\right)=-2\Re(u_x(0)\overline{u_t}(0))+\frac{1}{2}|u(0)|^2|u_x(0)|^2-\frac{1}{2}u(0)^2\overline{u_x(0)}^2.
\]
Using the boundary condition of $u$, we obtain
\begin{align*}
\partial_t\left(\int_{\R^{+}}|u_x|^2-\frac{1}{16}|u|^6\,dx\right)&=-2\alpha\Re(u(0)\overline{u_t(0)})-\frac{1}{2}\Im(u(0)|u(0)|^2\overline{u_t(0)})\\
&\quad +\frac{1}{2}|u(0)|^4\left(\alpha^2+\frac{1}{16}|u(0)|^4-\left(\alpha+\frac{i}{4}|u(0)|^2\right)^2\right)\\
&=-\alpha\partial_t(|u(0)|^2)+A,
\end{align*}
where $A=-\frac{1}{2}\Im(u(0)|u(0)|^2\overline{u_t(0)})+\frac{1}{2}|u(0)|^4\left(\alpha^2+\frac{1}{16}|u(0)|^4-\left(\alpha+\frac{i}{4}|u(0)|^2\right)^2\right)$. Moreover, we can not write $A$ in form $\partial_t B(u(0))$, for some function $B: \C \rightarrow \C$. Then, there is no conservation of energy of $u$ and hence, there is no conservation of energy of $v$. 
\end{remark}

The stability of standing waves for classical nonlinear Schr\"odinger equations was originally studied by Cazenave and Lions \cite{CaLi82} with variational and compactness arguments. A second approach, based on spectral arguments, was introduced by Weinstein \cite{We85,We86} and then considerably generalized by Grillakis, Shatah and Strauss \cite{GrShSt87,GrShSt90} (see also \cite{BiGeNo15}, \cite{BiNoSi19}). In our work, we use the variational techniques to study the stability of standing waves. First, we define
\begin{align*}
S_{\omega}(v)&:=\frac{1}{2}\left[\norm{v_x}^2_{L^2(\R^{+})}+\omega\norm{v}^2_{L^2(\R^{+})}+\alpha|v(0)|^2\right]-\frac{1}{32}\norm{v}^6_{L^6(\R^{+})}, \\
K_{\omega}(v)&:= \norm{v_x}^2_{L^2(\R^{+})}+\omega\norm{v}^2_{L^2(\R^{+})}+\alpha|v(0)|^2 - \frac{3}{16}\norm{v}^6_{L^6(\R^{+})}.
\end{align*} 
We are interested in the following variational problem:
\begin{align}
d(\omega) &:= \inf \left\{S_{\omega}(v) \mid K_{\omega}(v)=0, v \in H^1(\R^{+})\setminus \left\{0\right\}\right\} \; \label{minprob3}.
\end{align} 

We have the following result.
\begin{proposition}\label{variation characteristic problem}
Let $\omega,\alpha \in \R$ such that $\omega>\alpha^2$. All minimizers of \eqref{minprob3} are of form $e^{i\theta}\varphi$, where $\theta \in \R$ and $\varphi$ is given by 
\[
\varphi=2\sqrt[4]{\omega}\operatorname{sech}^{\frac{1}{2}}\left(2\sqrt{\omega}|x|+\tanh^{-1}\left(\frac{-\alpha}{\sqrt{\omega}}\right)\right).
\]  
\end{proposition}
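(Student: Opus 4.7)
The plan is to pass through the Euler--Lagrange equation for a minimizer and then classify real positive solutions of the resulting ODE.

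First, I would derive the Euler--Lagrange equation. A direct computation shows that $\langle S'_\omega(v), v\rangle = K_\omega(v)$ and that
\[
\langle K'_\omega(v), v\rangle = 2\bigl(\norm{v_x}^2_{L^2(\R_+)} + \omega\norm{v}^2_{L^2(\R_+)} + \alpha|v(0)|^2\bigr) - \tfrac{9}{8}\norm{v}^6_{L^6(\R_+)}.
\]
On the constraint $\{K_\omega = 0\}$ this quantity reduces to $-\tfrac{3}{4}\norm{v}^6_{L^6(\R_+)}$, strictly negative for $v\neq 0$. Hence the Lagrange multiplier $\mu$ in $S'_\omega(\varphi) = \mu K'_\omega(\varphi)$, when paired with $\varphi$, must vanish. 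Consequently any minimizer $\varphi$ is an unconstrained critical point of $S_\omega$, and integration by parts (with the boundary trace $\alpha|v(0)|^2$ producing the Robin condition) yields
\[
-\varphi_{xx} + \omega\varphi - \tfrac{3}{16}|\varphi|^4\varphi = 0 \quad \text{on } \R_+, \qquad \varphi_x(0) = \alpha\varphi(0).
\]

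Next I would reduce to a nonnegative real profile. Writing $\varphi = e^{i\theta(x)}|\varphi|$ on $\{\varphi \neq 0\}$, one has $|v_x|^2 = \bigl||v|_x\bigr|^2 + |v|^2|\theta_x|^2$, so both $S_\omega$ and $K_\omega$ decrease (or stay equal) in passing from $\varphi$ to $|\varphi|$. If $K_\omega(|\varphi|) < 0$, the scaling $\lambda|\varphi|$ with $\lambda = \bigl(16\,Q(|\varphi|)/(3\||\varphi|\|_6^6)\bigr)^{1/4} \in (0,1)$ meets the constraint and yields $S_\omega(\lambda|\varphi|) = \lambda^6 S_\omega(\varphi) < d(\omega)$, contradicting minimality. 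Hence $K_\omega(|\varphi|) = 0$, so $|\varphi|$ is itself a minimizer, the diamagnetic inequality must be an equality, and $\theta$ is constant on the support of $|\varphi|$. Setting $\psi := e^{-i\theta}\varphi \geq 0$ gives a nonnegative real solution of the same boundary value problem; since any zero of $\psi$ in $[0,\infty)$ combined with the Robin condition (at $x=0$) or a local-minimum argument (at $x>0$) forces $\psi(x_0) = \psi'(x_0) = 0$, ODE uniqueness yields $\psi > 0$ on $\R_+$.

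Finally, I classify the positive $H^1(\R_+)$ solutions. Multiplying the equation by $\psi'$ and using $\psi, \psi' \to 0$ along a suitable sequence at infinity (which follows from $\psi \in H^1$ and the ODE) produces the first integral
\[
(\psi')^2 = \omega\psi^2 - \tfrac{1}{16}\psi^6.
\]
A direct verification shows that $\psi_c(x) := 2\omega^{1/4}\sech^{1/2}(2\sqrt{\omega}\,x + c)$ solves this first integral and the full ODE; the autonomous phase-plane structure shows that every positive decaying solution on $\R_+$ is a translate of the single profile $2\omega^{1/4}\sech^{1/2}(2\sqrt{\omega}\,\cdot)$, hence coincides with $\psi_c$ for a unique $c$. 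Evaluating the Robin condition on $\psi_c$ yields $\psi_c'(0)/\psi_c(0) = -\sqrt{\omega}\tanh(c) = \alpha$, so $c = \tanh^{-1}(-\alpha/\sqrt{\omega})$, a value well defined precisely because of the hypothesis $\omega > \alpha^2$.

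The step I expect to be most delicate is the reduction to a nonnegative real profile: the diamagnetic argument has to be coupled with the scaling identity above to cope with the boundary trace term $\alpha|v(0)|^2$, and the possibility of a phase discontinuity at $x=0$ or at isolated zeros of $|\varphi|$ must be ruled out. Once this is settled, the identification of the explicit profile is essentially a one-dimensional ODE computation.
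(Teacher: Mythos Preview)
Your argument is correct and self-contained, but it is genuinely different from the paper's route. The paper does not derive the Euler--Lagrange equation for \eqref{minprob3} directly, nor does it carry out a diamagnetic reduction or an ODE phase-plane analysis on the half line. Instead, it passes through the auxiliary full-line problem \eqref{minprob1} with a delta potential: given a minimizer $v$ of \eqref{minprob3}, the even extension $w$ to $\R$ satisfies $\widetilde{K}_\omega(w)=2K_\omega(v)=0$ and $\widetilde{S}_\omega(w)=2S_\omega(v)=2d(\omega)=\widetilde d(\omega)$ (the last equality being Proposition~\ref{proposition vartiational 1}), so $w$ is a minimizer of \eqref{minprob1}. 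The paper then simply quotes the classification of minimizers for the delta-potential problem from \cite{FuJe08,FuOhOz08} (Proposition~\ref{new proposition 1}) to conclude $w=e^{i\theta}\tilde\varphi$, and restricts back to $\R_+$.

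What each approach buys: the paper's argument is a few lines long because the reduction-to-real-positive step and the ODE classification are absorbed into the cited literature; your approach avoids any dependence on the delta-potential results and would work in settings where no such reference is available. Two small remarks on your write-up: the logical order in the real-reduction step should be that $|\varphi|$ is first shown to be a minimizer, hence a (regular) solution of the boundary value problem, hence strictly positive by ODE uniqueness, and only then can one globalize the phase $\theta$; as written you set $\psi=e^{-i\theta}\varphi$ before positivity is established. Also, the identity $S_\omega(\lambda|\varphi|)=\lambda^6 S_\omega(\varphi)$ is correct but relies on both functions lying on the Nehari constraint so that $S_\omega=\tfrac{1}{16}\norm{\cdot}_{L^6}^6$ there; it may be worth saying this explicitly.
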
   

We give the definition of stability and instability by blow up in $H^1(\R^{+})$. Let $w(t,x)=e^{i\omega t}\varphi(x)$ be a standing wave solution of \eqref{eq first}. 
\begin{itemize}
\item[(1)] The standing wave $w$ is called \emph{orbitally stable} in $H^1(\R^{+})$ if for all $\varepsilon>0$, there exists $\delta>0$ such that if $v_0 \in H^1(\R^{+})$ satisfies 
\[
\norm{v_0-\varphi}_{H^1(\R^{+})} \leq \delta,
\]
then the associated solution $v$ of \eqref{eq first} satisfies 
\[
\mathop{\sup}\limits_{t \in \R}\mathop{\inf}\limits_{\theta \in \R}\norm{v(t)-e^{i\theta}\varphi}_{H^1(\R^{+})} < \varepsilon.
\]  
Otherwise, $w$ said to be \emph{instable}. 
\item[(2)] The standing wave $w$ is called \emph{instable by blow up} if there exists a sequence $(\varphi_n)$ such that $\mathop{\lim}\limits_{n\rightarrow \infty}\norm{\varphi_n-\varphi}_{H^1(\R^{+})} = 0$ and the associated solution $v_n$ of \eqref{eq first} blows up in finite time for all $n$.  
\end{itemize}

Our second main result is the following.

\begin{theorem}\label{stable and instale of SDW for eq 2}
Let $\alpha,\omega\in \R$ be such that $\omega>\alpha^2$. The standing wave $e^{i\omega t}\varphi$, where $\varphi$ is the profile as in Proposition \ref{variation characteristic problem}, solution of \eqref{eq first}, satisfies the following properties.
\begin{itemize}
\item[(1)] If $\alpha<0$ then the standing wave is orbitally stable in $H^1(\R^{+})$.
\item[(2)] If $\alpha>0$ then the standing wave is instable by blow up.
\end{itemize}  
\end{theorem}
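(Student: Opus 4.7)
For part (1), with $\alpha<0$, I would follow the classical Cazenave--Lions variational approach. Suppose for contradiction that there exist $\varepsilon_0>0$, initial data $v_0^{(n)}\to\varphi$ in $H^1(\R^{+})$ and times $t_n$ such that $\inf_{\theta\in\R}\norm{v^{(n)}(t_n)-e^{i\theta}\varphi}_{H^1(\R^+)}\geqslant \varepsilon_0$. Writing $w_n:=v^{(n)}(t_n)$, Assumption \ref{assumptionB} gives $M(w_n)\to M(\varphi)$ and $S_\omega(w_n)\to d(\omega)$. The hypothesis $\omega>\alpha^2$ combined with the trace inequality $|w(0)|^2\leqslant 2\norm{w}_{L^2}\norm{w_x}_{L^2}$ makes the quadratic part of $S_\omega$ coercive on $H^1(\R^+)$, which bounds $(w_n)$ uniformly in $H^1$. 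Choosing the unique $\lambda_n>0$ with $K_\omega(\lambda_n w_n)=0$, continuity of $\lambda\mapsto K_\omega(\lambda w)$ forces $\lambda_n\to 1$, so $(\lambda_n w_n)$ is a minimising sequence for $d(\omega)$. Since the half-line problem has no translation invariance, the only possible compactness loss is escape to infinity, which is ruled out by an elementary argument on $\R^+$. After extracting a strongly $H^1$-convergent subsequence, Proposition \ref{variation characteristic problem} identifies the limit as $e^{i\theta_0}\varphi$, contradicting the lower bound $\varepsilon_0$.

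For part (2), with $\alpha>0$, I construct the destabilising sequence via the $L^2$-preserving scaling $\varphi_\lambda(x):=\lambda^{1/2}\varphi(\lambda x)$ for $\lambda>1$ close to $1$, which satisfies $\varphi_\lambda\to\varphi$ in $H^1(\R^+)$ and, by the explicit formula of Proposition \ref{variation characteristic problem}, $\varphi_\lambda\in\Sigma$. From the Euler--Lagrange equation $-\varphi''+\omega\varphi=\tfrac{3}{16}\varphi^5$, the boundary relation $\varphi'(0)=\alpha\varphi(0)$, and the Pohozaev identity $\norm{\varphi'}_{L^2}^2=\omega\norm{\varphi}_{L^2}^2-\tfrac{1}{16}\norm{\varphi}_{L^6(\R^+)}^6$, one computes the first derivative of $\lambda\mapsto K_\omega(\varphi_\lambda)$ and the first two derivatives of $\lambda\mapsto S_\omega(\varphi_\lambda)$ at $\lambda=1$, obtaining a strictly negative derivative for $K_\omega$ and a strict local maximum at $\lambda=1$ for $S_\omega$. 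Hence for $\lambda>1$ close to $1$, the profile $\varphi_\lambda$ belongs to the set $\mathcal{P}_-:=\{u\in H^1(\R^+):K_\omega(u)<0,\ S_\omega(u)<d(\omega)\}$.

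It remains to upgrade membership in $\mathcal{P}_-\cap\Sigma$ to finite-time blow-up. Theorem \ref{thm blow up} cannot be applied as-is, since the Pohozaev identities yield $E(\varphi)=\tfrac{\alpha}{4}|\varphi(0)|^2>0$, so perturbations of $\varphi$ in $H^1(\R^+)$ all have positive energy. Instead I would revisit the virial argument underlying Theorem \ref{thm blow up}: for $V(t):=\int_0^\infty x^2|v(t,x)|^2\,dx$ the calculation should produce an estimate of the form $V''(t)\leqslant 4K_\omega(v(t))+B(t)$, with boundary term $B(t)$ carrying a sign favourable for $\alpha>0$. Flow-invariance of $\mathcal{P}_-$, which follows from conservation of $S_\omega$, continuity of $K_\omega$ and the defining minimum property of $d(\omega)$, provides a uniform bound $K_\omega(v(t))\leqslant -2\bigl(d(\omega)-S_\omega(\varphi_\lambda)\bigr)<0$ along the orbit, forcing $V''(t)\leqslant -\delta<0$ and blow-up in finite time. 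Taking $\lambda_n\downarrow 1$ then produces the required sequence.

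The main obstacle lies in the refined virial computation of part (2): the Robin boundary condition generates extra terms in $V''(t)$ and one must check that for $\alpha>0$ these combine with $K_\omega(v(t))$ to produce a bound strong enough to give blow-up in spite of the fact that $E(\varphi_\lambda)>0$ for $\lambda$ near $1$. For part (1), the parallel difficulty is carrying out the concentration-compactness analysis on $\R^+$ in the absence of translation invariance and turning weak $H^1$-convergence of minimising sequences into strong $H^1$-convergence; both rest on the explicit classification of minimisers in Proposition \ref{variation characteristic problem}.
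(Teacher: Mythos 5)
Both halves of your outline stop just short of the ingredients that actually carry the paper's proof, and in both cases the missing ingredient is exactly what you label as an unchecked step.

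For (1), the pivotal unproved claim is that your normalisers satisfy $\lambda_n\to1$, i.e.\ that $K_\omega(v_n(t_n))\to0$. Conservation of $E$ and $M$ only gives $S_\omega(v_n(t_n))\to d(\omega)$, and this does not control $K_\omega$: writing $S_\omega=\frac12L-\frac16N$, $K_\omega=L-N$, a sequence with $N(w_n)\to\infty$ and $L(w_n)=\frac13N(w_n)+2d(\omega)+o(1)$ has $S_\omega(w_n)\to d(\omega)$ while $K_\omega(w_n)\to-\infty$ and $\lambda_n^4=L(w_n)/N(w_n)\to\frac13$, not $1$; for the same reason (the sextic term is mass-critical in 1D and can balance the quadratic part) even the uniform $H^1$ bound you assert is not automatic. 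The paper excludes this scenario \emph{before} any compactness argument, via the Shatah/Colin--Ohta device: flow-invariance of the sets $\mathcal B^{\pm}$ at the shifted frequencies $\omega\pm\varepsilon$ (Lemma \ref{lemmainvariant under flow}) combined with strict monotonicity and convexity of $\tau\mapsto d(\omega+\tau)$ — which rests on $\partial_\omega\norm{\tilde\varphi}^2_{L^2(\R)}>0$, available precisely for $\alpha<0$ — traps $N(v(t))$ near $3d(\omega)$ (Lemma \ref{lm property of h}), and only then does $K_\omega(v_n(t_n))\to0$ follow. Likewise, your ``elementary'' exclusion of mass escaping to infinity is in fact the strict inequality $\tilde d(\omega)<d^0(\omega)$ between the even problem with and without the boundary term, again valid only because $\alpha<0$ (Step 1 of Proposition \ref{procompactness property}); neither of these two inputs appears in your outline.

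For (2), your construction of $\varphi_\lambda$ with $K_\omega(\varphi_\lambda)<0$ and $S_\omega(\varphi_\lambda)<d(\omega)$ for $\lambda>1$ agrees with the paper, but the blow-up step fails as formulated. The virial computation on the half line gives $\partial_tI=4J-\int_0^\infty x|u|^4\,dx$ and $\partial_tJ=2P(v)$ with $P(v)=\norm{v_x}^2_{L^2(\R^+)}-\frac1{16}\norm{v}^6_{L^6(\R^+)}+\frac\alpha2|v(0)|^2$: no mass term appears and the $L^6$ coefficient is $\frac1{16}$, not $\frac3{16}$. Since $2P=2K_\omega-2\omega\norm{v}^2_{L^2}-\alpha|v(0)|^2+\frac14\norm{v}^6_{L^6}$, a uniform negative bound on $K_\omega(v(t))$ does \emph{not} make $P(v(t))$ negative — the $+\frac14\norm{v}^6_{L^6}$ term can dominate — so no estimate of the form $V''\leq 4K_\omega+B$ with a favourable boundary term comes out of the computation, and invariance of $\{K_\omega<0,\ S_\omega<d(\omega)\}$ alone cannot give blow-up. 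The paper's actual route is to introduce $P$ explicitly, work with the set $\mathcal V=\{K<0,\ P<0,\ S<m\}$, prove $m=d_{\mathcal M}$ for the auxiliary problem constrained by $P=0$, $K\leq0$ (Lemma \ref{lemma relation m and dM}) so that $\mathcal V$ is flow-invariant (Lemma \ref{invariant under flow of V}), and then exploit the concavity of $\lambda\mapsto S(v_\lambda)$, which uses $\alpha>0$ and $v(0)\neq0$ (Lemma \ref{property of function under scaling 1}), to obtain the uniform bound $P(v(t))<S(v_0)-m<0$ (Lemma \ref{lm3.16}) that drives the virial argument (Proposition \ref{3.17}); correspondingly one must also verify $P(\varphi_\lambda)<0$ for $\lambda>1$, which your construction never addresses. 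You rightly flag this as the main obstacle, but as written it is the missing core of part (2), not a detail to be checked.
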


\begin{remark}
To our knowledge, the conservation law play an important role to study the stability of standing waves. However, the existence of conservation of energy is not always true (see remark \ref{remarkonconservationlaw}). Our work can only extend for the models with nonlinear terms provide the conservation law of solution.  
\end{remark}

This paper is organized as follows. First, under the assumption of local well posedness in $H^1(\R^{+})$, we prove the existence of blowing up solutions using a virial argument Theorem \ref{thm blow up}. In section \ref{sectionblowup}, we give the proof of Theorem \ref{thm blow up}. Second, in the case $\alpha<0$, using similar arguments as in \cite{CoOh06}, we prove the orbital stability of standing waves of \eqref{eq first}. In the case $\alpha >0$, using similar arguments as in \cite{Stefan09}, we prove the instability by blow up of standing waves. The proof of Theorem \ref{stable and instale of SDW for eq 2} is obtained in Section \ref{sectionstabilty}.       

\subsection*{Acknowledgement}
The author wishes to thank Prof.Stefan Le Coz for his guidance and encouragement.

\section{Proof of the main results}
We consider the equation \eqref{eq first} and assume that the assumption \ref{assumptionB} holds. 
%Let $\varphi \in H^1(\R^{+})$ and $v \in C(I,H^1(\R^{+}))$ be the associated solution of \eqref{eq first}. First, we prove the conservation law of \eqref{eq first}. By density of $D(\widetilde{H}_{\alpha})$ in $H^1(\R^{+})$ and continuously dependence on initial data, it is enough to prove the conservation law for $v$ is classical solution of \eqref{eq first} i.e $v \in C(I,D(\widetilde{H}_{\alpha}))$. Indeed, let $\varphi_n \in D(\widetilde{H}_{\alpha})$ such that $\norm{\varphi_n-\varphi}_{H^1(\R^{+})} \rightarrow 0$ as $n \rightarrow \infty$ and $v_n \in C(J_n,D(\widetilde{H}_{\alpha}))$ be an associated classical solution of \eqref{eq first}. Assume for each $n \in \N$, $v_n$ satisfies the conservation law
%\begin{align*}
%E(v_n)&=E(\varphi_n),\\
%M(v_n)&=M(\varphi_n).
%\end{align*}   
%We see that $g(v_n) \in C(J_n,H^1(\R^{+}))$ for all $n \in \N$. Thus, by \cite[Lemma 4.1.1]{CaHa98}, $v_n$ solves 
%\[
%v_n(t)=e^{i\widetilde{H}_{\alpha}t}\varphi_n-i\int_0^te^{i\widetilde{H}_{\alpha}(t-s)}g(v_n(s))\, ds.
%\]

\subsection{The existence of a blowing-up solution}\label{sectionblowup}
In this section, we give the proof of Theorem \ref{thm blow up} using a virial argument (see e.g \cite{Gl77} or \cite{Wu13} for similar arguments). Let $\alpha>0$. Let $v$ be a solution of \eqref{eq first}. 
%By density of $D(\widetilde{H}_{\alpha})$ in $H^1(\R^{+})$, we can assume that $v \in D(\widetilde{H}_{\alpha})$. Recall that since $v \in H^2(\R^{+})$, we have 
%\begin{equation}\label{eq true}
%v(x),v_x(x) \rightarrow 0 \text{ as } x \rightarrow \infty.
%\end{equation} 
%Formally, since $v$ solves \eqref{eq first}, we have
%\begin{align*}
%\frac{1}{2} \partial_t(|v|^2)=-\partial_x\Im(v_x\overline{v}).
%\end{align*}
%Therefore, using the Robin boundary condition we have 
%\begin{align*}
%\partial_t\left(\frac{1}{2}\int_0^{\infty}|v|^2 \, dx\right) &= - \Im(v_x\overline{v})(\infty) + \Im(v_x\overline{v})(0) \\
%&= \Im(v_x\overline{v})(0) \\
%&= \alpha\Im(|v(0)|^2) \\
%&=0.
%\end{align*}
%This implies the conservation of the mass. By elementary calculations, we have
%\begin{align*}
%\partial_t\left(|v_x|^2-\frac{1}{16}|v|^6\right)&= \partial_x\left(2\Re(v_x\overline{v}_t)-\frac{1}{2}|v|^2|v_x|^2+\frac{1}{2}v^2\overline{v}_x^2\right).
%\end{align*}
%Hence, integrating the two sides in space, we obtain
%\begin{align*}
%\partial_t \left(\int_{\R_{+}}|v_x|^2\,dx-\frac{1}{16}|v|^6\,dx\right)&=-2\Re(v_x(0)\overline{v}_t(0))+\frac{1}{2}|v(0)|^2|v_x(0)|^2-\frac{1}{2}v(0)^2\overline{v_x(0)}^2 
%\end{align*}
%Using the Robin boundary condition for $v$, we obtain
%\begin{align*}
%\partial_t \left(\int_{\R_{+}}|v_x|^2\,dx-\frac{1}{16}|v|^6\,dx\right)&=-2\alpha\Re(v(0)\overline{v_t(0)})=-\alpha\partial_t(|v(0)|^2).
%\end{align*}
%This implies the conservation of the energy. 
To prove the existence of blowing up solutions we use similar arguments as in \cite{Wu13}. Set
\[
I(t) = \int_0^{\infty}x^2|v(t)|^2 \, dx.
\]
Let 
\begin{equation}\label{eq alpha}
u(t,x)= v(t,x) \operatorname{exp}\left(-\frac{i}{4}\int_x^{+\infty} |v|^2 \, dy \right)
\end{equation}
be a Gauge transform in $H^1(\R_{+})$. Then the problem \eqref{eq first} is equivalent with
\begin{equation}
\begin{cases}\label{eq 3.1}
iu_t+u_{xx}=i|u|^2u_x ,\\
u_x(0)=\alpha u(0)+\frac{i}{4}|u(0)|^2u(0).
\end{cases}
\end{equation}
The equation \eqref{eq 3.1} has a simpler nonlinear form, but we pay this simplification with a nonlinear boundary condition. Observe that 
\[
I(t)=\int_0^{\infty}x^2|u(t)|^2 \, dx =\int_0^{\infty}x^2|v(t)|^2\, dx.
\]
By a direct calculation, we get
\begin{align}\label{eq derivative in time of I}
\partial_t I(t) &=2\Re\int_0^{\infty}x^2\overline{u(t,x)}\partial_tu(t,x)\,dx=2\Re\int_0^{\infty}x^2\overline{u}(iu_{xx}+|u|^2u_x)\,dx\\
&=2\Im\int_0^{\infty}2x\overline{u}u_x\,dx-\frac{1}{2}\int_0^{\infty}2x|u|^4\,dx\\
&=4\Im\int_0^{\infty}xu_x\overline{u}\,dx-\int_0^{\infty}x|u|^4\,dx.
\end{align}
Define 
\[
J(t)=\Im\int_0^{\infty}xu_x\overline{u}\,dx.
\]
We have
\begin{align*}
\partial_tJ(t) &=\int_0^{\infty}xu_x\overline{u}_t\,dx+\int_0^tx\overline{u}u_{xt}\,dx\\
&=-\Im\int_0^{\infty}xu_t\overline{u}_x\,dx-\Im\int_0^{\infty}(x\overline{u})_xu_t\,dx\\
&= -2\Im\int_0^{\infty}xu_t\overline{u}_x\,dx-\Im\int_0^{\infty}u_t\overline{u}\,dx\\
&=-2\Im\int_0^{\infty}x\overline{u}_x(iu_{xx}+|u|^2u_x)\,dx-\Im\int_0^{\infty}\overline{u}(iu_{xx}+|u|^2u_x)\,dx\\
&= -2\Re\int_0^{\infty}x\overline{u}_xu_{xx}\,dx-\Re\int_0^{\infty}\overline{u}u_{xx}\,dx-\Im\int_0^{\infty}|u|^2u_x\overline{u}\,dx \\
&=-\int_0^{\infty}x\partial_x|u_x|^2\,dx-\Re(\overline{u}u_x)(+\infty)+\Re(\overline{u}u_x)(0)+\Re\int_0^{\infty}\overline{u_x}u_x \, dx -\Im\int_0^{\infty}|u|^2u_x\overline{u}\, dx\\
&=\int_0^{\infty}|u_x|^2\,dx+\Re(\overline{u}(0)u_x(0))+\int_0^{\infty}|u_x|^2\,dx-\Im\int_0^{\infty}|u|^2u_x\overline{u}\,dx\\
&=2\int_0^{\infty}|u_x|^2\,dx-\Im\int_0^{\infty}|u|^2u_x\overline{u}\,dx+\Re(\overline{u}(0)u_x(0)).
\end{align*}
Using the Robin boundary condition we have
\[
\partial_tJ(t)=2\int_0^{\infty}|u_x|^2\,dx-\Im\int_0^{\infty}|u|^2u_x\overline{u}\,dx+\alpha|u(0)|^2.
\]
Moreover using the expression of $v$ in term of $u$ given in \eqref{eq alpha}, we get
\begin{align*}
\partial_tJ(t)&=2\int_0^{\infty}|v_x|^2\,dx-\frac{1}{8}\int_0^{\infty}|v|^6\,dx+\alpha|v(0)|^2\\
&=4E(v)-\alpha|v(0)|^2\leq 4E(v)=4E(\varphi).
\end{align*}
By integrating the two sides of the above inequality in time we have
\begin{align}\label{eq estimate of J}
J(t) &\leq J(0)+4E(\varphi) t.
\end{align}
Integrating the two sides of \eqref{eq derivative in time of I} in time we have
\begin{align*}
I(t) & = I(0) + 4\int_0^t J(s)\, ds - \int_0^t\int_0^{\infty}x|u(s,x)|^4\, dx\, ds \\
& \leq I(0)+4\int_0^t J(s)\, ds.
\end{align*}
Using \eqref{eq estimate of J} we have
\begin{align*}
I(t) &\leq I(0)+4\int_0^t (J(0)+4E(\varphi)s) \, ds \\
& \leq I(0)+4J(0)t +8E(\varphi) t^2.  
\end{align*}
From the assumption $E(\varphi)<0$, there exists a finite time $T_{*}>0$ such that $I(T_{*})=0$,
\[
I(t)>0 \text{ for } 0<t<T_{*}.
\] 
Note that
\begin{align*}
\int_0^{\infty}|\varphi(x)|^2 \, dx &=\int_0^{\infty}|v(t,x)|^2\, dx = -2\Re\int_0^{\infty}xv(t,x)\overline{v_x}(t,x)\,dx \\
&\leq 2\norm{xv}_{L^2_x(\R_{+})}\norm{v_x}_{L^2_x(\R_{+})} =2\sqrt{I(t)}\norm{v_x}_{L^2_x(\R_{+})}.
\end{align*}
Then there exists a constant $C=C(\varphi)>0$ such that
\[
\norm{v_x}_{L^2_x(\R_{+})} \geq \frac{C}{2\sqrt{I(t)}} \rightarrow +\infty \text{ as } t \rightarrow T_{*}.
\] 
Then the solution $v$ blows up in finite time in $H^1(\R^{+})$. This complete the proof of Theorem \ref{thm blow up}.

\subsection{Stability and instability of standing waves}
\label{sectionstabilty}
In this section, we give the proof of Theorem \ref{stable and instale of SDW for eq 2}. First, we find the form of the standing waves of \eqref{eq first}.
\subsubsection{Standing waves}

Let $v=e^{i\omega t}\varphi$ be a solution of \eqref{eq first}. Then $\varphi$ solves
\begin{equation}\label{eq sdw new}
\begin{cases}
0 = \varphi_{xx}-\omega\varphi+\frac{1}{2}\Im(\varphi_x\overline{\varphi})\varphi+\frac{3}{16}|\varphi|^4\varphi ,\ \text{ for } x>0 \\
\varphi_x(0)=\alpha\varphi(0),\\
\varphi\in H^2(\R^{+}).
\end{cases}
\end{equation}
Set 
\[
A:= \omega-\frac{1}{2}\Im(\varphi_x\overline{\varphi})-\frac{3}{16}|\varphi|^4
\]
By writing $\varphi=f+ig$ for $f$ and $g$ real valued functions, for $x>0$, we have
\begin{align*}
f_{xx}&=Af ,\\
g_{xx}&=Ag. 
\end{align*}
Thus,
\[
\partial_x(f_xg-g_xf)=f_{xx}g-g_{xx}f=0 \text{ when } x\neq 0.
\]
Hence, by using $f,g \in H^2(\R^{+})$, we have
\[
f_x(x)g(x)-g_x(x)f(x)=0 \text{ when } x\neq 0.
\]
Then, for all $x \neq 0$, we have
\[
\Im(\varphi_x(x)\overline{\varphi (x)})=g_x(x)f(x)-f_x(x)g(x)=0,
\]
hence, \eqref{eq sdw new} is equivalent to
\begin{equation}\label{eq sdw of robbin condition}
\begin{cases}
0=\varphi_{xx}-\omega\varphi+\frac{3}{16}|\varphi|^4\varphi,\ \text{ for } x>0\\
\varphi_x(0)=\alpha\varphi(0),\\
\varphi\in H^2(\R^{+}).
\end{cases}
\end{equation}

We have the following description of the profile $\varphi$.

\begin{proposition}\label{thm standing waves of Robin condition}
Let $\omega>\alpha^2$. There exists a unique (up to phase shift) solution $\varphi$ of \eqref{eq sdw of robbin condition}, which is of the form
\begin{equation}\label{eq sdw robin}
\varphi = 2\sqrt[4]{\omega}\operatorname{sech}^{\frac{1}{2}}\left(2\sqrt{\omega}|x|+\tanh^{-1}\left(\frac{-\alpha}{\sqrt{\omega}}\right)\right),
\end{equation}
for all $x>0$.
\end{proposition}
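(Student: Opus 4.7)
The plan is to reduce \eqref{eq sdw of robbin condition} to a scalar real ODE, integrate it by the standard soliton quadrature, and then fix the remaining translation parameter using the Robin boundary condition.

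First, I would argue that any $H^2$ solution is, up to a global phase, real-valued. Writing $\varphi=f+ig$ with $f,g$ real, both $f$ and $g$ satisfy the same linear equation $u_{xx}=Au$ with $A=\omega-\tfrac{3}{16}|\varphi|^4$, so the Wronskian $W=f_xg-g_xf$ is constant on $(0,\infty)$. Since $\varphi\in H^2(\R^+)$ and $H^1(\R^+)\hookrightarrow C_0([0,\infty))$, one has $f,g,f_x,g_x\to 0$ at infinity; hence $W\equiv 0$, which forces $f$ and $g$ to be proportional. Thus $\varphi=e^{i\theta}\psi$ with $\psi$ real; after possibly changing sign (which amounts to shifting $\theta$ by $\pi$), I may assume $\psi\geq 0$ somewhere.

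Next, I would derive the first integral. The real function $\psi$ solves $\psi_{xx}=\omega\psi-\tfrac{3}{16}\psi^5$; multiplying by $\psi_x$ and integrating gives
\[
\psi_x^2=\omega\psi^2-\tfrac{1}{16}\psi^6+C.
\]
The decay $\psi,\psi_x\to 0$ at infinity forces $C=0$, so $\psi_x^2=\psi^2\bigl(\omega-\tfrac{1}{16}\psi^4\bigr)$, which in particular implies $|\psi|\leq 2\omega^{1/4}$. Moreover, if $\psi(x_0)=0$ for some $x_0\geq 0$, the first integral forces $\psi_x(x_0)=0$, and uniqueness for the second-order ODE yields $\psi\equiv 0$; therefore any nontrivial solution is strictly positive on $[0,\infty)$.

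I would then invert the quadrature using the ansatz $\psi=2\omega^{1/4}\sech^{1/2}(u)$. A direct computation gives
\[
\omega\psi^2-\tfrac{1}{16}\psi^6=4\omega^{3/2}\sech(u)\tanh^2(u),\qquad \psi_x^2=\sqrt{\omega}\,\sech(u)\tanh^2(u)\,u_x^2,
\]
so the first integral reduces to $u_x^2=4\omega$. The decay of $\psi$ forces $|u(x)|\to\infty$; using that $\sech$ is even I may take $u_x=+2\sqrt{\omega}$, hence $u(x)=2\sqrt{\omega}\,x+c$ for some $c\in\R$ and $\psi(x)=2\omega^{1/4}\sech^{1/2}\bigl(2\sqrt{\omega}\,x+c\bigr)$. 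Finally, a short calculation yields
\[
\frac{\psi_x(0)}{\psi(0)}=-\sqrt{\omega}\tanh(c),
\]
so the Robin condition $\psi_x(0)=\alpha\psi(0)$ is equivalent to $\tanh(c)=-\alpha/\sqrt{\omega}$. The assumption $\omega>\alpha^2$ gives $|\alpha/\sqrt{\omega}|<1$, so this equation has a unique solution $c=\tanh^{-1}(-\alpha/\sqrt{\omega})$, producing the claimed formula and the uniqueness up to phase. The step I expect to be most delicate is the analytic justification that $C=0$ and that $\psi$ has constant sign on $[0,\infty)$; the rest is the standard soliton calculation.
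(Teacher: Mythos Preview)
Your proof is correct and takes a genuinely different route from the paper's. The paper proceeds by extending $\varphi$ evenly to all of $\R$, observing that the extension $w$ satisfies the stationary NLS equation with a point interaction at the origin (the Robin condition on $\R^+$ becoming the jump condition $w_x(0^+)-w_x(0^-)=2\alpha w(0)$ on $\R$), and then simply invoking the classification of Fukuizumi--Jeanjean \cite{FuJe08} for that $\delta$-potential problem on the line. You instead carry out the full ODE analysis directly on the half-line: the Wronskian argument to reduce to a real profile, the first-integral quadrature with the decay forcing $C=0$, the sign analysis ruling out interior zeros, the $\sech^{1/2}$ ansatz, and the determination of the translation constant from the Robin condition. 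Your approach is self-contained and avoids the external reference, which is a clear expository advantage; the paper's approach is shorter to write and, more importantly, introduces the even-extension/$\delta$-potential correspondence that it reuses in the variational part of the argument (Propositions~\ref{new proposition 1} and~\ref{proposition vartiational 1}). The steps you flag as delicate ($C=0$ and constant sign) are handled correctly: both follow immediately from the $H^2$ decay and ODE uniqueness, exactly as you indicate.
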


\begin{proof}
Let $w$ be the even function defined by
\[
w(x)=\left\{\begin{matrix}
\varphi(x) \text{ if } x \geq 0, \\
\varphi(-x) \text{ if } x \leq 0.
\end{matrix}\right.
\]
Then $w$ solves
\begin{equation}
\begin{cases}
0=-w_{xx}+\omega w-\frac{3}{16}|w|^4w, \text{ for } x\neq 0,\\
w_x(0^{+})-w_x(0^{-})=2\alpha w(0),\\
w \in H^2(\R)\setminus \left\{0\right\} \cap H^1(\R).
\end{cases}
\end{equation}
Using the results of Fukuizumi and Jeanjean \cite{FuJe08}, we obtain that 
\[
w(x)=2\sqrt[4]{\omega}\operatorname{sech}^{\frac{1}{2}}\left(2\sqrt{\omega}|x|+\tanh^{-1}\left(\frac{-\alpha}{\sqrt{\omega}}\right)\right)
\]
up to phase shift provided $\omega>\alpha^2$. Hence, for $x>0$ we have
\[
\varphi(x)=2\sqrt[4]{\omega}\operatorname{sech}^{\frac{1}{2}}\left(2\sqrt{\omega}|x|+\tanh^{-1}\left(\frac{-\alpha}{\sqrt{\omega}}\right)\right)
\]
up to phase shift. This implies the desired result.
\end{proof}

\subsubsection{The variational problems}
\label{section variation problem}

In this section, we give the proof of Proposition \ref{variation characteristic problem}.

First, we introduce another variational problem:
\begin{align}
\widetilde{d}(\omega)&:= \inf \left\{\widetilde{S}_{\omega}(v) \mid v \text{ even}, \widetilde{K}_{\omega}(v)=0, v \in H^1(\R)\setminus \left\{0\right\}\right\} , \; \label{minprob1}
\end{align}
where $\widetilde{S}_{\omega}$, $\widetilde{K}_{\omega}$ are defined for all $v\in H^1(\R)$ by
\begin{align*}
\widetilde{S}_{\omega}(v)&:=\frac{1}{2}\left[\norm{v_x}^2_{L^2(\R)}+\omega\norm{v}^2_{L^2(\R)}+2\alpha|v(0)|^2\right]-\frac{1}{32}\norm{v}^6_{L^6(\R)},\\
\widetilde{K}_{\omega}(v)&:=\norm{v_x}^2_{L^2(\R)}+\omega\norm{v}^2_{L^2(\R)}+2\alpha|v(0)|^2-\frac{3}{16}\norm{v}^6_{L^6(\R)}. 
\end{align*}
The functional $\widetilde{K}_{\omega}$ is called Nehari functional. The following result has proved in \cite{FuJe08,FuOhOz08}.  
\begin{proposition}
\label{new proposition 1}
Let $\omega>\alpha^2$ and $\varphi$ satisfies
\begin{equation}\label{3}
\begin{cases}
-\varphi_{xx}+ 2\alpha\delta\varphi+ \omega\varphi-\frac{3}{16}|\varphi|^4\varphi=0,\\
\varphi \in H^1(\R) \setminus\left\{0\right\}.
\end{cases}
\end{equation}
Then, there exists a unique positive solution $\varphi$ of \eqref{3}. This solution is the unique positive minimizer of \eqref{minprob1}. Furthermore, we have an explicit formula for $\varphi$
\[
\varphi (x)=2\sqrt[4]{\omega}\operatorname{sech}^{\frac{1}{2}}\left(2\sqrt{\omega}|x|+\tanh^{-1}\left(\frac{-\alpha}{\sqrt{\omega}}\right)\right).
\]
\end{proposition}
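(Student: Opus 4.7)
The plan is to reduce \eqref{3} to an explicit ODE on each half-line together with a jump condition at $0$, read off existence and uniqueness of a positive solution from a phase-plane computation, and match the minimization problem \eqref{minprob1} against this unique profile via a concentration-compactness argument in the symmetric class.

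For the ODE reduction, testing \eqref{3} against $H^1$ functions shows that a weak solution satisfies the classical equation $-\varphi_{xx}+\omega\varphi-\frac{3}{16}|\varphi|^4\varphi=0$ on $\R\setminus\{0\}$, together with the jump condition $\varphi_x(0^+)-\varphi_x(0^-)=2\alpha\varphi(0)$ produced by the $\delta$-potential. For a positive $H^1$ solution, multiplying by $\varphi_x$ and using decay at $\pm\infty$ gives the first integral $(\varphi_x)^2=\omega\varphi^2-\frac{1}{16}\varphi^6$, which integrates explicitly into translates $\varphi(x)=2\omega^{1/4}\sech^{1/2}(2\sqrt{\omega}(x-x_0))$. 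Writing the positive solution on $(0,\infty)$ as $2\omega^{1/4}\sech^{1/2}(2\sqrt{\omega}x+\theta_+)$ and on $(-\infty,0)$ as $2\omega^{1/4}\sech^{1/2}(-2\sqrt{\omega}x+\theta_-)$, continuity at $0$ forces $\theta_-=\pm\theta_+$, and in either case the jump condition reduces to $\tanh(\theta_\pm)=-\alpha/\sqrt{\omega}$. The hypothesis $\omega>\alpha^2$ is exactly what is needed for this equation to admit a unique real solution $\theta=\tanh^{-1}(-\alpha/\sqrt{\omega})$, which simultaneously yields existence and uniqueness of the positive $H^1$ solution of \eqref{3} and the claimed explicit formula.

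For the variational part, I would establish existence of a minimizer of \eqref{minprob1} by a Lions-type concentration-compactness argument applied to a minimizing sequence of even functions: Gagliardo--Nirenberg bounds the sequence in $H^1(\R)$, the constraint $\widetilde{K}_{\omega}=0$ rules out vanishing, the even symmetry eliminates the translation-induced dichotomy, and the continuity of the trace $v\mapsto v(0)$ on $H^1(\R)$ ensures the boundary term passes to the weak limit. Replacing the minimizer by its modulus leaves $\widetilde{S}_{\omega}$ and $\widetilde{K}_{\omega}$ unchanged, so one may assume it is real and nonnegative; the associated Lagrange multiplier is then absorbed by a scaling to produce a solution of \eqref{3}, and the strong maximum principle on each half-line upgrades nonnegativity to strict positivity. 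Uniqueness of the minimizer up to phase then follows from the uniqueness of the positive solution obtained above.

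The main obstacle is the compactness of the minimizing sequence, specifically ruling out vanishing of the $L^6$ mass and ensuring that the trace at $0$ of the weak limit is correctly captured in the passage to the limit. The restriction to even functions simplifies this analysis considerably by removing the translation invariance that would otherwise cause loss of compactness, and is essentially why \eqref{minprob1} is posed in the symmetric class. This technical step is carried out in detail in \cite{FuJe08,FuOhOz08}, whose conclusion I would invoke directly to close the argument.
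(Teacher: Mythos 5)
The paper never proves Proposition \ref{new proposition 1}: it is quoted verbatim from \cite{FuJe08,FuOhOz08} ("The following result has proved in..."), so any argument is by construction a different route from the paper's. Your sketch is essentially the standard proof underlying those references, and it is sound in outline: testing against $H^1$ functions gives the classical equation off the origin plus the jump $\varphi_x(0^+)-\varphi_x(0^-)=2\alpha\varphi(0)$; the first integral $(\varphi_x)^2=\omega\varphi^2-\tfrac{1}{16}\varphi^6$ yields the $\sech^{1/2}$ translates; matching at $0$ gives $\tanh\theta=-\alpha/\sqrt{\omega}$, solvable exactly when $\omega>\alpha^2$; and the Nehari minimization over even functions, the modulus trick, and the removal of the Lagrange multiplier identify the minimizer with this profile. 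Since you ultimately invoke \cite{FuJe08,FuOhOz08} for the compactness of even minimizing sequences, your argument and the paper's citation are compatible; yours simply makes explicit the mechanism (ODE/jump analysis plus constrained minimization) that the paper leaves entirely to the literature.

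Three caveats you should tighten. First, in the matching step the branch $\theta_-=-\theta_+$ does \emph{not} reduce to $\tanh\theta_\pm=-\alpha/\sqrt{\omega}$: the jump of $\varphi_x$ then vanishes, which forces $\alpha=0$, and for $\alpha=0$ every translate $2\omega^{1/4}\sech^{1/2}\bigl(2\sqrt{\omega}(x-x_0)\bigr)$ is a positive solution, so uniqueness as literally stated needs $\alpha\neq 0$ (or must be read within the even class); your phrase ``in either case'' papers over this. Second, boundedness of the minimizing sequence comes from the coercivity of $\tilde L$ under $\omega>\alpha^2$ (the paper's coercivity lemma), not from Gagliardo--Nirenberg, and for $\alpha>0$ evenness alone does not forbid two symmetric bumps escaping to $\pm\infty$ — that is precisely the delicate point you are outsourcing to the references, so it deserves to be flagged as such rather than attributed to symmetry. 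Third, no rescaling is needed to absorb the multiplier: on the constraint, $\langle \widetilde S_{\omega}'(v),v\rangle=\widetilde K_{\omega}(v)=0$ while $\langle \widetilde K_{\omega}'(v),v\rangle=2\tilde L(v)-6\tilde N(v)=-4\tilde L(v)<0$, so the multiplier is zero and $\widetilde S_{\omega}'(v)=0$ directly.
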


We have the following relation between the variational problems.
\begin{proposition}
\label{proposition vartiational 1}
Let $\omega>\alpha^2$. We have 
\[
d(\omega)=\frac{1}{2}\widetilde{d}(\omega).
\]
\end{proposition}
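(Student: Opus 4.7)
The plan is to set up a bijective correspondence, via even reflection, between the admissible set for $d(\omega)$ and the admissible set for $\widetilde{d}(\omega)$, and to check that the two functionals $S_\omega$ and $\widetilde{S}_\omega$ (resp.\ $K_\omega$ and $\widetilde{K}_\omega$) differ by exactly a factor of $2$ under this correspondence. Once this is established the equality $d(\omega)=\tfrac12\widetilde{d}(\omega)$ is immediate.

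\textbf{Step 1 (Bijection via even extension).} Define
\[
E:H^1(\R^+)\to\{w\in H^1(\R):w\text{ even}\},\qquad (Ev)(x):=v(|x|),
\]
and the restriction $R w:=w|_{\R^+}$ in the opposite direction. The only nontrivial verification is that $Ev$ is genuinely in $H^1(\R)$: since $v\in H^1(\R^+)$ has a continuous trace at $0$, the even extension has no jump at the origin and its distributional derivative is the odd extension of $v_x$, which lies in $L^2(\R)$. Clearly $R\circ E=\operatorname{id}$ and $E\circ R=\operatorname{id}$ on even functions, and $Ev\neq 0 \iff v\neq 0$.

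\textbf{Step 2 (Scaling of the functionals).} For every $v\in H^1(\R^+)$, a direct integration by symmetry gives
\[
\|Ev\|_{L^p(\R)}^p=2\|v\|_{L^p(\R^+)}^p\quad (p=2,6),\qquad \|(Ev)_x\|_{L^2(\R)}^2=2\|v_x\|_{L^2(\R^+)}^2,\qquad |(Ev)(0)|^2=|v(0)|^2.
\]
Plugging these into the definitions and simplifying the factor $2\alpha$ appearing in $\widetilde{S}_\omega,\widetilde{K}_\omega$ against the $\alpha$ appearing in $S_\omega,K_\omega$, I obtain the clean identities
\[
\widetilde{S}_\omega(Ev)=2\,S_\omega(v),\qquad \widetilde{K}_\omega(Ev)=2\,K_\omega(v).
\]

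\textbf{Step 3 (Both inequalities).} For the inequality $\widetilde{d}(\omega)\le 2d(\omega)$: any $v$ admissible for \eqref{minprob3} yields $Ev$ even, nonzero, with $\widetilde{K}_\omega(Ev)=2K_\omega(v)=0$, hence admissible for \eqref{minprob1}, and $\widetilde{S}_\omega(Ev)=2S_\omega(v)$; taking the infimum over $v$ gives the bound. Conversely, for $2d(\omega)\le\widetilde{d}(\omega)$: any $w$ admissible for \eqref{minprob1} has $Rw\in H^1(\R^+)\setminus\{0\}$ (since $w$ even and nonzero forces $Rw\neq 0$) with $K_\omega(Rw)=\tfrac12\widetilde{K}_\omega(w)=0$, so $Rw$ is admissible for \eqref{minprob3} and $S_\omega(Rw)=\tfrac12\widetilde{S}_\omega(w)$; taking the infimum over $w$ closes the argument.

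The only part that needs any care at all is the $H^1$ regularity of the even extension in Step 1; everything else is a bookkeeping calculation. There is no compactness or minimizer existence issue to handle here because we are only comparing infima, and the bijection in Step 1 is an exact bijection between admissible sets.
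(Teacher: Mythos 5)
Your proof is correct and rests on the same even-reflection/restriction correspondence and the factor-$2$ identities $\widetilde{S}_{\omega}(Ev)=2S_{\omega}(v)$, $\widetilde{K}_{\omega}(Ev)=2K_{\omega}(v)$ that the paper's proof uses, so in substance the two arguments coincide. The one genuine difference is that you compare the infima directly, testing with arbitrary admissible functions, whereas the paper argues with minimizers: its first inequality starts from ``a minimizer of \eqref{minprob3}'', although the existence of such a minimizer is not established at that stage (existence for \eqref{minprob1} comes from Proposition \ref{new proposition 1}, but minimizers of \eqref{minprob3} are only identified later, in Proposition \ref{variation characteristic problem}, whose proof relies on the present statement). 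Your infimum-level formulation avoids any appeal to existence of minimizers or compactness and thus closes that small gap; apart from this, both proofs reduce to the same bookkeeping, including the only slightly delicate point, namely that the even extension of an $H^1(\R^{+})$ function lies in $H^1(\R)$ with derivative the odd extension of $v_x$, which you check correctly.
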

\begin{proof}
Assume $v$ is a minimizer of \eqref{minprob3}, define the $H^1(\R)$ function $w$ by
\[
w(x)=\left\{ 
\begin{matrix}
v(x) \text{ if } x>0,\\
v(-x) \text{ if } x<0.
\end{matrix}
\right.
\]    
The function $w \in H^1(\R) \setminus \left\{0\right\}$ verifies
\begin{align*}
\widetilde{S}_{\omega}(w) &= 2 S_{\omega}(v)=2d(\omega),\\
\widetilde{K}_{\omega}(w) &= 2 K_{\omega}(v)=0.
\end{align*}
This implies that
\begin{equation}\label{relation between minima 1}
\widetilde{d}(\omega) \leq \widetilde{S}_{\omega}(w) =2d(\omega). 
\end{equation}
Now, assume $v$ is a minimizer of \eqref{minprob1}. Let $w$ be the restriction of $v$ on $\R^{+}$, then, 
\[
K_{\omega}(w)=\frac{1}{2}\widetilde{K}_{\omega}(v)=0.
\]
Hence, we obtain
\begin{equation}\label{relation between minima 2}
\widetilde{d}(\omega)=\widetilde{S}_{\omega}(v)=2S_{\omega}(w)\geq 2d(\omega).  
\end{equation}
Combining \eqref{relation between minima 1} and \eqref{relation between minima 2} we have 
\[
\widetilde{d}(\omega)=2d(\omega).
\]
This implies the desired result.
\end{proof} 
\begin{proof}[Proof of Theorem \ref{variation characteristic problem}] 
Let $v$ be a minimizer of \eqref{minprob3}. Define $w(x) \in H^1(\R)$ by
\[
w(x)=\left\{
\begin{matrix}
v(x) \text{ if } x>0,\\
v(-x) \text{ if } x<0.
\end{matrix}
\right.
\]
Then, $w$ is an even function. Moreover, $w$ satisfies 
\begin{align*}
\widetilde{K}_{\omega}(w)&=2K_{\omega}(v)=0,\\
\widetilde{S}_{\omega}(w)&=2S_{\omega}(v)=2d(\omega)=\widetilde{d}(\omega). 
\end{align*}
Hence, $w$ is a minimizer of \eqref{minprob1}. From Propositions \ref{new proposition 1}, \ref{proposition vartiational 1}, $w$ is of the form $e^{i\theta}\varphi$, where  $\theta \in \R$ is a constant and $\varphi$ is of the form
\[
2\sqrt[4]{\omega}\operatorname{sech}^{\frac{1}{2}}\left(2\sqrt{\omega}|x|+\tanh^{-1}\left(\frac{-\alpha}{\sqrt{\omega}}\right)\right).
\]
Hence, $v=w\vert_{\R^{+}}$ satisfies
\[
v(x) = e^{i\theta}\varphi(x), 
\]
for $x>0$. This completes the proof of Proposition \ref{variation characteristic problem}.
\end{proof}

%\begin{remark}
%Let $\omega,\alpha \in \R$ such that $\omega>\alpha^2$. Define 
%\[
%\mathcal{H}_{\omega}=\left\{v \in H^1(\R^{+}), S_{\omega}(v) \leq d(\omega), K_{\omega}(v) \geq 0\right\}.
%\]
%If $\varphi \in \mathcal{H}_{\omega}$ then the associated solution $v \in \mathcal{C}((T_{min},T_{max}),H^1(\R^{+}))$ of \eqref{eq first} is global. Indeed, assume that $\varphi \in \mathcal{H}_{\omega}$. Let $t \in (T_{min},T_{max})$, using the conservation laws, we have
%\[
%S_{\omega}(v(t))=E(v(t))+\omega M(v(t))=E(\varphi)+\omega M(\varphi)=S_{\omega}(\varphi) \leq d(\omega).
%\] 
%Now, assume there exists $t_0>0$ such that $K_{\omega}(v(t_0))<0$, then, there exists $t_1 \in [0,t_0)$ such that $K_{\omega}(v(t_1))=0$. Hence, $v(t_1)$ is a minimizer of \eqref{minprob3} and $v(t_1)= e^{i\theta}\varphi$. This implies that $v$ is a standing wave solution of \eqref{eq first}, hence, $K_{\omega}(v(t))=0$ for all $t$. This contradicts the assumption. Hence, for all $t \in (T_{min},T_{max})$, $v(t)\in \mathcal{H}_{\omega}$ and  
%\begin{align*}
%\frac{1}{2}\norm{v_x}^2_{L^2(\R^{+})} +\frac{1}{2}\omega \norm{v}^2_{L^2(\R^{+})} +\frac{1}{2}\alpha |v(0)|^2 -\frac{1}{32}\norm{v}^6_{L^6(\R^{+})}&\leq \widetilde{d}(\omega) ,\\
%\norm{v_x}^2_{L^2(\R^{+})} +\omega \norm{v}^2_{L^2(\R^{+})} +\alpha |v(0)|^2-\frac{3}{16}\norm{v}^6_{L^6(\R^{+})} \geq 0.
%\end{align*}
%It implies that $\left(\frac{3}{16}-\frac{2}{32}\right)\norm{v}^6_{L^6(\R^{+})}$ bounded. Thus, $\norm{v_x}_{L^2(\R^{+})}$ is uniformly bounded in $(T_{min},T_{max})$ and we obtain the desired result.
%\end{remark}

\subsubsection{Stability and instability of standing waves}

In this section, we give the proof of Theorem \ref{stable and instale of SDW for eq 2}. We use the notations $\widetilde{S}_{\omega}$ and $\widetilde{K}_{\omega}$ as in Section \ref{section variation problem}. First, we define
\begin{align}
N(v)&:= \frac{3}{16}\norm{v}^6_{L^6(\R^{+})}, \label{N}\\
L(v)&:= \norm{v_x}^2_{L^2(\R^{+})}+\omega\norm{v}^2_{L^2(\R^{+})}+\alpha|v(0)|^2. \label{L}
\end{align}
We can rewrite $S_{\omega},K_{\omega}$ as follows
\begin{align*}
S_{\omega}&= \frac{1}{2}L-\frac{1}{6}N,\\
K_{\omega}&= L-N.
\end{align*}
We have the following classical properties of the above functions.
\begin{lemma}\label{lm property}
Let $(\omega,\alpha)\in \R^2$ such that $\omega>\alpha^2$. The following assertions hold.
\begin{itemize}
\item[(1)] There exists a constant $C>0$ such that 
\[
L(v) \geq C\norm{v}^2_{H^1(\R^{+})} \quad \forall v \in H^1(\R^{+}).
\]
\item[(2)] We have $d(\omega)>0$.
\item[(3)] If $v\in H^1(\R^{+})$ satisfies $K_{\omega}(v)<0$ then $L(v)>3d(\omega)$. 
\end{itemize}
\end{lemma}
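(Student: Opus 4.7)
My plan is to handle the three assertions in turn, with (1) as the main technical point and (2), (3) following by now-standard Nehari-manifold arguments built on (1).

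For (1), I split according to the sign of $\alpha$. If $\alpha \geq 0$, the boundary term is nonnegative, and since $\omega > \alpha^2 \geq 0$, the bound $L(v) \geq \min(1,\omega) \norm{v}_{H^1(\R^{+})}^2$ is immediate. The interesting case is $\alpha < 0$, where the boundary term is sign-indefinite and must be absorbed. I would use the one-dimensional trace inequality $|v(0)|^2 \leq 2 \norm{v}_{L^2(\R^{+})} \norm{v_x}_{L^2(\R^{+})}$ (obtained from $|v(0)|^2 = -2\int_0^\infty \Re(v\overline{v_x})\,dx$) combined with Young's inequality $2ab \leq t a^2 + t^{-1} b^2$ to get
\begin{equation*}
L(v) \geq (1 - |\alpha| t) \norm{v_x}_{L^2(\R^{+})}^2 + \bigl( \omega - |\alpha|/t \bigr) \norm{v}_{L^2(\R^{+})}^2.
\end{equation*}
Both coefficients are strictly positive precisely when $|\alpha|/\omega < t < 1/|\alpha|$, and such a $t > 0$ exists exactly under the hypothesis $\omega > \alpha^2$. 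This is where the spectral-gap assumption enters, and is the \emph{main obstacle}.

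For (2), given $v \in H^1(\R^{+}) \setminus \{0\}$ with $K_\omega(v) = 0$, we have $L(v) = N(v)$. The Sobolev embedding $H^1(\R^{+}) \hookrightarrow L^6(\R^{+})$ combined with (1) yields $N(v) \leq C\norm{v}_{H^1(\R^{+})}^6 \leq C' L(v)^3$, and comparing with $L(v) = N(v)$ forces $L(v) \geq c_0 > 0$ for some constant $c_0 = c_0(\omega,\alpha)$. Since on the Nehari constraint $S_\omega(v) = \tfrac{1}{2} L(v) - \tfrac{1}{6} N(v) = \tfrac{1}{3} L(v)$, passing to the infimum gives $d(\omega) \geq c_0 / 3 > 0$.

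For (3), I would use a scaling argument. For $\lambda > 0$,
\begin{equation*}
K_\omega(\lambda v) = \lambda^2 L(v) - \lambda^6 N(v),
\end{equation*}
which vanishes at $\lambda_0 := (L(v)/N(v))^{1/4}$. The assumption $K_\omega(v) < 0$ forces $L(v) < N(v)$, hence $\lambda_0 < 1$, and $\lambda_0 v$ is a nonzero competitor in the variational problem defining $d(\omega)$. On the Nehari manifold one computes $S_\omega(\lambda_0 v) = \tfrac{1}{3} \lambda_0^2 L(v)$, so $\tfrac{1}{3} \lambda_0^2 L(v) \geq d(\omega)$, and since $\lambda_0^2 < 1$ this yields $L(v) > 3 d(\omega)$ as claimed.
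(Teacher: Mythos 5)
Your proposal is correct and follows essentially the same route as the paper: the trace bound $|v(0)|^2\leq 2\norm{v}_{L^2(\R^{+})}\norm{v_x}_{L^2(\R^{+})}$ with the boundary term absorbed under $\omega>\alpha^2$ for (1), the Sobolev embedding together with the Nehari identity $S_\omega=\tfrac13 L$ for (2), and rescaling onto the Nehari constraint with $\lambda_0\in(0,1)$ for (3). The only differences are cosmetic (Young's inequality with a free parameter instead of the paper's choice of constant, and an explicit formula for $\lambda_0$), so nothing further is needed.
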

\begin{proof}
We have
\begin{align*}
|v(0)|^2 &= -\int_0^{\infty}\partial_x(|v(x)|^2)\,dx=-2\Re\int_0^{\infty}v(x)\overline{v}_x(x)\, dx\\
&\leq 2\norm{v}_{L^2(\R^{+})}\norm{v_x}_{L^2(\R^{+})}.
\end{align*}
Hence,
\begin{align*}
L(v)&=\norm{v_x}^2_{L^2(\R^{+})}+\omega\norm{v}^2_{L^2(\R^{+})}+\alpha|v(0)|^2\\
&\geq \norm{v_x}^2_{L^2(\R^{+})}+\omega\norm{v}^2_{L^2(\R^{+})}-2|\alpha|\norm{v}_{L^2(\R^{+})}\norm{v_x}_{L^2(\R^{+})}\\
&\geq C\norm{v}^2_{H^1(\R^{+})}+(1-C)\norm{v_x}^2_{L^2(\R^{+})}+(\omega-C)\norm{v}^2_{L^2(\R^{+})}-2|\alpha|\norm{v}_{L^2(\R^{+})}\norm{v_x}_{L^2(\R^{+})}\\
&\geq C\norm{v}^2_{H^1(\R^{+})}+(2\sqrt{(1-C)(\omega-C)}-2|\alpha|)\norm{v}_{L^2(\R^{+})}\norm{v_x}_{L^2(\R^{+})}.
\end{align*}
From the assumption $\omega>\alpha^2$, we can choose $C\in (0,1)$ such that
\[
2\sqrt{(1-C)(\omega-C)}-2|\alpha|>0.
\] 
This implies (1). Now, we prove (2). Let $v$ be an element of $H^1(\R^{+})$ satisfying $K_{\omega}(v)=0$. We have
\[
C\norm{v}^2_{H^1(\R^{+})} \leq L(v) = N(v) \leq C_1 \norm{v}^6_{H^1(\R^{+})}.
\]
Then, 
\[
\norm{v}^2_{H^1(\R^{+})} \geq \sqrt[4]{\frac{C}{C_1}}.
\]
From the fact that, for $v$ satisfying $K_{\omega}(v)=0$, we have $S_{\omega}(v)=S_{\omega}(v)-\frac{1}{6}K_{\omega}(v)=\frac{1}{3}L(v)$, this implies that
\[
d(\omega) = \frac{1}{3}\inf\left\{L(v): v \in H^1(\R^{+}), K_{\omega}(v)=0\right\} \geq \frac{C}{3}\sqrt[4]{\frac{C}{C_1}} >0.
\]
Finally, we prove (3). Let $v \in H^1(\R^{+})$ satisfying $K_{\omega}(v)< 0$. Then, there exists $\lambda_1 \in (0,1)$ such that $K_{\omega}(\lambda_1v)=\lambda_1^2L(v)-\lambda_1^6N(v)=0$. Since $v \neq 0$, we have $3d(\omega)\leq L(\lambda_1 v)=\lambda_1^2 L(v)< L(v)$.
\end{proof}

Define
\begin{align}
\tilde N(v)&:= \frac{3}{16}\norm{v}^6_{L^6}, \label{N}\\
\tilde L(v)&:= \norm{v_x}^2_{L^2}+\omega\norm{v}^2_{L^2}+2\alpha|v(0)|^2. \label{L}
\end{align}
We can rewrite $S_{\omega},K_{\omega}$ as follows
\begin{align*}
\tilde{S}_{\omega}&= \frac{1}{2}\tilde L-\frac{1}{6}\tilde N,\\
\tilde{K}_{\omega}&= \tilde L- \tilde N.
\end{align*}
As consequence of the previous lemma, we have the following result.

\begin{lemma}\label{lm property}
Let $(\omega,\alpha)\in \R^2$ such that $\omega>\alpha^2$. The following assertions hold.
\begin{itemize}
\item[(1)] There exists a constant $C>0$ such that 
\[
\tilde L(v) \geq C\norm{v}^2_{H^1} \quad \forall v \in H^1(\R).
\]
\item[(2)] We have $\tilde d(\omega)>0$.
\item[(3)] If $v\in H^1$ satisfies $\tilde{K}_{\omega}(v)<0$ then $\tilde L(v)>3 \tilde d(\omega)$. 
\end{itemize}
\end{lemma}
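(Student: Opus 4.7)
The three items mirror exactly the three items of the preceding lemma, and the strategy is to transfer each proof from the half-line $\R^{+}$ to the full line $\R$, with Proposition~\ref{proposition vartiational 1} supplying the bridge between the two variational problems.

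For (1), I would first establish the trace inequality $|v(0)|^{2}\leq \|v\|_{L^{2}(\R)}\|v_{x}\|_{L^{2}(\R)}$ for $v\in H^{1}(\R)$. This follows exactly as in the previous lemma by writing
\[
|v(0)|^{2}=-\int_{0}^{\infty}\partial_{x}|v|^{2}\,dx=-2\Re\int_{0}^{\infty}v\overline{v}_{x}\,dx,
\]
and the symmetric identity on $(-\infty,0]$, then averaging and applying Cauchy--Schwarz. Substituting into the definition of $\tilde L$ I get
\[
\tilde L(v)\geq \|v_{x}\|^{2}_{L^{2}(\R)}+\omega\|v\|^{2}_{L^{2}(\R)}-2|\alpha|\,\|v\|_{L^{2}(\R)}\|v_{x}\|_{L^{2}(\R)},
\]
after which the same quadratic-completion trick used in the preceding proof, combined with the hypothesis $\omega>\alpha^{2}$, produces a constant $C\in(0,1)$ with $\tilde L(v)\geq C\|v\|^{2}_{H^{1}(\R)}$.

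For (2), I plan to invoke Proposition~\ref{proposition vartiational 1}, which yields $\tilde d(\omega)=2d(\omega)$; since $d(\omega)>0$ by part (2) of the previous lemma, the claim is immediate.

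For (3), I would repeat the scaling argument verbatim. Given $v\in H^{1}(\R)$ with $\tilde K_{\omega}(v)<0$ we have $\tilde L(v)<\tilde N(v)$, so the polynomial $\lambda\mapsto \tilde K_{\omega}(\lambda v)=\lambda^{2}\tilde L(v)-\lambda^{6}\tilde N(v)$ has a zero at some $\lambda_{1}\in(0,1)$. Using the identity $\tilde S_{\omega}=\frac{1}{2}\tilde L-\frac{1}{6}\tilde N=\frac{1}{3}\tilde L$ on $\{\tilde K_{\omega}=0\}$, the inequality $\tilde L(v)>3\tilde d(\omega)$ should drop out of $3\tilde d(\omega)\leq \tilde L(\lambda_{1}v)=\lambda_{1}^{2}\tilde L(v)<\tilde L(v)$. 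The one subtle point -- and what I would expect to be the main obstacle -- is that $\tilde d(\omega)$ is defined as an infimum over \emph{even} test functions, whereas $\lambda_{1}v$ need not be even. I would address this by observing that $\tilde d(\omega)$ equals the unconstrained infimum of $\tilde S_{\omega}$ on $\{\tilde K_{\omega}=0\}$ in $H^{1}(\R)\setminus\{0\}$: any minimizer solves the Euler--Lagrange equation~\eqref{3}, which by Proposition~\ref{new proposition 1} admits a unique (up to phase) solution, and that solution is already even. With this identification, the scaling step applies to arbitrary $v\in H^{1}(\R)$ and closes the proof.
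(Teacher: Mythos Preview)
The paper offers no proof of this lemma at all; it simply records it ``as a consequence of the previous lemma''. Your plan for (1) and (2) is correct and is exactly the kind of argument the author is silently invoking: the trace estimate and the quadratic completion carry over verbatim to $H^{1}(\R)$, and Proposition~\ref{proposition vartiational 1} together with part~(2) of the half-line lemma immediately gives $\tilde d(\omega)=2d(\omega)>0$.

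For (3) you are right to flag the evenness constraint in the definition of $\tilde d(\omega)$ as the only nontrivial point, but your proposed resolution has a real gap. Arguing that ``any minimizer of the unconstrained Nehari problem solves \eqref{3} and is therefore even'' presupposes that the unconstrained infimum is \emph{attained}; without existence, uniqueness of solutions to \eqref{3} says nothing about the value of the infimum. Moreover, Proposition~\ref{new proposition 1} only asserts uniqueness among \emph{positive} solutions, so a further reduction step (passage from a minimizer to $|v|$, then to a positive minimizer) would be needed before that proposition applies. None of this is done, and for $\alpha>0$ it is not even clear the two infima agree: translating a free soliton far from the origin produces non-even near-minimizers with action close to $d^{0}(\omega)$, which may lie strictly below $\tilde d(\omega)$.

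The clean way out --- and what the paper tacitly relies on --- is to notice that the lemma is only ever invoked in Proposition~\ref{procompactness property}, where the test functions $w_n$, $w$ and hence $w_n-w$ are all even. For even $v$, the scaling $\lambda_1 v$ stays even and the inequality $3\tilde d(\omega)\le \tilde L(\lambda_1 v)<\tilde L(v)$ goes through directly; equivalently, one restricts to $\R^{+}$ and quotes part~(3) of the half-line lemma together with $\tilde d(\omega)=2d(\omega)$. If you want the statement for arbitrary $v$ in the case $\alpha<0$ (which is the only case used later), symmetric decreasing rearrangement does the job: it preserves $\tilde N$, does not increase $\|v_x\|_{L^2}$, and --- because $\alpha<0$ and $|v^{*}(0)|=\|v\|_{L^{\infty}}\ge |v(0)|$ --- does not increase $\tilde L$, so one reduces to the even case after a further scaling.
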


We introduce the following properties.
\begin{lemma}\label{lmneedtoprovecompactnessinR}
 Let $2 \leq p <\infty$ and $(f_n)$ be a bounded sequence in $L^p(\R)$. Assume that $f_n \rightarrow f$ a.e in $\R$. Then we have
 \[
 \norm{f_n}^p_{L^p}-\norm{f_n-f}^p_{L^p}-\norm{f}^p_{L^p} \rightarrow 0.
 \]
\end{lemma}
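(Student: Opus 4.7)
The plan is to prove this by the standard Brezis--Lieb argument, i.e., by combining a pointwise algebraic inequality with the Lebesgue dominated convergence theorem. First I would note that Fatou's lemma, applied to $|f_n|^p\to|f|^p$ a.e., yields $f\in L^p(\R)$, so that $(f_n-f)$ is also bounded in $L^p(\R)$.

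The key ingredient is the following elementary inequality: for every $\eps>0$ there exists $C_\eps>0$ such that, for all $a,b\in\C$,
\begin{equation*}
\bigl||a+b|^p-|a|^p\bigr|\leq \eps|a|^p+C_\eps|b|^p.
\end{equation*}
This follows from the fact that the map $t\mapsto|a+tb|^p$ is $C^1$ with a derivative controlled by $p(|a|+|b|)^{p-1}|b|$, combined with Young's inequality to split the product. Setting $a=f_n-f$ and $b=f$, and then subtracting $|f|^p$, this gives the pointwise bound
\begin{equation*}
\bigl||f_n|^p-|f_n-f|^p-|f|^p\bigr|\leq \eps|f_n-f|^p+(C_\eps+1)|f|^p.
\end{equation*}

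I would then define
\begin{equation*}
W_{n,\eps}:=\Bigl(\bigl||f_n|^p-|f_n-f|^p-|f|^p\bigr|-\eps|f_n-f|^p\Bigr)^{+}.
\end{equation*}
The pointwise inequality shows $0\leq W_{n,\eps}\leq (C_\eps+1)|f|^p\in L^1(\R)$, and since $f_n\to f$ a.e., we have $W_{n,\eps}\to 0$ a.e. The Lebesgue dominated convergence theorem then yields $\int_\R W_{n,\eps}\,dx\to 0$ as $n\to\infty$.

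Finally, since
\begin{equation*}
\bigl||f_n|^p-|f_n-f|^p-|f|^p\bigr|\leq W_{n,\eps}+\eps|f_n-f|^p,
\end{equation*}
integrating and letting $M:=\sup_n\|f_n-f\|_{L^p}^p<\infty$, I get
\begin{equation*}
\limsup_{n\to\infty}\int_\R \bigl||f_n|^p-|f_n-f|^p-|f|^p\bigr|\,dx\leq \eps M.
\end{equation*}
Letting $\eps\to 0$ concludes the proof. The only mildly technical step is the pointwise algebraic inequality; everything else is routine dominated convergence. Note also that the statement is a classical result, and the same argument works verbatim with $\R^+$ replaced by $\R$ (which is what is needed in the sequel).
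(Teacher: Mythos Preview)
Your proof is correct and is precisely the standard Brezis--Lieb argument. The paper itself does not supply a proof of this lemma: it is stated as a known auxiliary result and then used in the compactness argument of Proposition~\ref{procompactness property}. Your write-up therefore fills in a detail the paper leaves to the reader, and there is nothing to compare. One minor slip: in your closing remark you presumably mean that the argument works verbatim with $\R$ replaced by $\R^{+}$ (not the other way around), since the lemma as stated is already on $\R$.
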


\begin{lemma}
The following minimization problem is equivalent to the problem \eqref{minprob1} i.e same minimum and the minimizers:
\begin{equation}
\label{minimizernew}
d:= \inf\left\{\frac{1}{16}\norm{u}^6_{L^6}: u \text{ even }, u \in H^1(\R)\setminus \{0\}, \tilde{K}_{\omega}(u) \leq 0 \right\}.
\end{equation} 
\end{lemma}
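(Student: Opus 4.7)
The plan is to exploit the identity $\widetilde{S}_\omega(u) = \frac{1}{16}\|u\|_{L^6}^6$ that holds automatically on the Nehari manifold $\{\widetilde{K}_\omega = 0\}$, together with a homogeneous scaling that projects any feasible element of \eqref{minimizernew} onto this manifold while decreasing the $L^6$-norm. Indeed, on $\{\widetilde{K}_\omega = 0\}$ we have $\tilde L(u) = \tilde N(u)$, so
\[
\widetilde{S}_\omega(u) = \tfrac{1}{2}\tilde L(u) - \tfrac{1}{6}\tilde N(u) = \tfrac{1}{3}\tilde N(u) = \tfrac{1}{16}\|u\|^6_{L^6}.
\]
In particular, every minimizer of \eqref{minprob1} is admissible for \eqref{minimizernew} (since $\widetilde K_\omega = 0 \leq 0$) and its value in the new problem equals $\widetilde d(\omega)$, giving $d \leq \widetilde d(\omega)$.

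For the reverse inequality, I would fix an arbitrary even $u \in H^1(\R)\setminus\{0\}$ with $\widetilde K_\omega(u) \leq 0$ and rescale. By Lemma \ref{lm property}(1) one has $\tilde L(u) > 0$, and the hypothesis $\widetilde K_\omega(u) \leq 0$ forces $\tilde N(u) \geq \tilde L(u) > 0$, so the number $\lambda := \bigl(\tilde L(u)/\tilde N(u)\bigr)^{1/4}$ lies in $(0,1]$, with $\lambda = 1$ if and only if $\widetilde K_\omega(u) = 0$. A direct computation using $\widetilde K_\omega(\lambda u) = \lambda^2 \tilde L(u) - \lambda^6 \tilde N(u)$ shows $\widetilde K_\omega(\lambda u) = 0$, so $\lambda u$ is admissible for \eqref{minprob1}, and therefore
\[
\widetilde d(\omega) \leq \widetilde S_\omega(\lambda u) = \tfrac{1}{16}\|\lambda u\|^6_{L^6} = \tfrac{\lambda^6}{16}\|u\|^6_{L^6} \leq \tfrac{1}{16}\|u\|^6_{L^6}.
\]
Taking the infimum over admissible $u$ yields $\widetilde d(\omega) \leq d$, hence $d = \widetilde d(\omega)$.

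The equivalence of minimizers follows quickly. By the first step every minimizer of \eqref{minprob1} is already a minimizer of \eqref{minimizernew}. Conversely, suppose $u$ minimizes \eqref{minimizernew}. If $\widetilde K_\omega(u) < 0$ strictly, then the above rescaling produces $\lambda < 1$ and
\[
\widetilde d(\omega) \leq \tfrac{\lambda^6}{16}\|u\|^6_{L^6} = \lambda^6 d < d = \widetilde d(\omega),
\]
a contradiction. Hence $\widetilde K_\omega(u) = 0$, and then $\widetilde S_\omega(u) = \frac{1}{16}\|u\|^6_{L^6} = d = \widetilde d(\omega)$, so $u$ is indeed a minimizer of \eqref{minprob1}.

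There is no substantial obstacle here: the whole proof is driven by the fact that $\tilde L$ is homogeneous of degree $2$ while $\tilde N$ is homogeneous of degree $6$, which makes the projection $u \mapsto \lambda u$ onto the Nehari manifold explicit and \emph{strictly} value-decreasing whenever $\widetilde K_\omega(u) < 0$. The only ingredient that must be cited is Lemma \ref{lm property}(1), invoked to guarantee $\tilde L(u) > 0$ on $H^1(\R)\setminus\{0\}$ under the assumption $\omega > \alpha^2$, so that the rescaling parameter $\lambda$ is well-defined and positive.
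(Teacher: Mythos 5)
Your proof is correct and follows essentially the same route as the paper: the identity $\widetilde S_\omega(u)=\tfrac{1}{16}\|u\|_{L^6}^6$ on the Nehari set $\{\widetilde K_\omega=0\}$, plus a scaling that projects any admissible $u$ with $\widetilde K_\omega(u)<0$ onto that set while strictly decreasing the $L^6$-norm. If anything, your version is slightly tighter: you obtain $\widetilde d(\omega)\le d$ for every admissible $u$ via the explicit factor $\lambda=(\tilde L(u)/\tilde N(u))^{1/4}$, whereas the paper derives the reverse inequality only through a minimizer of \eqref{minimizernew}, whose existence it does not discuss, and locates the projection parameter by a continuity argument rather than explicitly.
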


\begin{proof}
We see that the minimizer problem \eqref{minprob1} is equivalent to following problem:
\begin{equation}\label{minnew2}
\inf\left\{\frac{1}{16}\norm{u}^6_{L^6}:u \text{ even } u \in H^1(\R)\setminus \{0\}, \tilde{K}_{\omega}(u) = 0 \right\}.
\end{equation}
Let $v$ be a minimizer of \eqref{minprob1} then $\tilde{K}_{\omega}(v) \leq 0$, hence, $\tilde d(\omega)=\frac{1}{16}\norm{v}^6_{L^6} \geq d$. Now, let $v$ be a minimizer of \eqref{minimizernew}. We prove that $\tilde{K}_{\omega}(v)=0$. Indeed, assuming $\tilde{K}_{\omega}(v) <0$, we have
\[
\tilde{K}_{\omega}(\lambda v) = \lambda^2\left(\norm{v_x}^2_{L^2}+\omega \norm{v}^2_{L^2}+2\alpha |v(0)|^2 - \frac{3\lambda^4}{16}\norm{v}^6_{L^6}\right) \leq 0,
\]  
as $0<\lambda$ is small enough. Thus, by continuity, there exists a $\lambda_0 \in (0,1)$ such that $\tilde{K}_{\omega}(\lambda_0 v)=0$. We have $d<\tilde d(\omega) \leq \frac{1}{16}\norm{\lambda_0 v}^6_{L^6} < \frac{1}{16}\norm{v}^6_{L^6}=d$. Which is a contradiction. It implies that $\tilde{K}_{\omega}(v)=0$ and $v$ is a minimizer of \eqref{minnew2}, hence $v$ is a minimizer of \eqref{minprob1}. This completes the proof.  
\end{proof}
Now, using the similar arguments in \cite[Proof of Proposition 2]{FuOhOz08}, we have the following result.
\begin{proposition}\label{procompactness property}  
Let $(\omega,\alpha) \in \R^2$ be such that $\alpha<0$, $\omega>\alpha^2$ and $(w_n) \subset H^1(\R)$ be a even sequence satisfying the following properties
\begin{align*}
\widetilde{S}_{\omega}(w_n) &\rightarrow \tilde{d}(\omega),\\
\widetilde{K}_{\omega}(w_n) &\rightarrow 0.
\end{align*}

as $n \rightarrow \infty$. Then, there exists a minimizer $w$ of \eqref{minprob1} such that $w_n \rightarrow w$ strongly in $H^1(\R)$ up to subsequence.   
\end{proposition}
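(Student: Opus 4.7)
The plan is a concentration-compactness scheme adapted to the even sector of $H^1(\R)$: the absence of translation freedom combined with $\alpha<0$ yields a strict subadditivity inequality that serves as the engine for ruling out both vanishing of the weak limit and splitting of the Brezis-Lieb remainder.

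\textbf{Step 1: Boundedness and weak limit.} The identity $\widetilde{S}_\omega=\tfrac{1}{3}\tilde L+\tfrac{1}{6}\widetilde{K}_\omega$ combined with the hypotheses gives $\tilde L(w_n)\to 3\tilde d(\omega)$. The coercivity $\tilde L(v)\geq C\lVert v\rVert_{H^1(\R)}^2$ (Lemma~\ref{lm property}(1), valid for $\omega>\alpha^2$ via the trace inequality $|v(0)|^2\leq 2\lVert v\rVert_{L^2}\lVert v_x\rVert_{L^2}$) shows that $(w_n)$ is bounded in $H^1(\R)$. Extracting a subsequence, $w_n\rightharpoonup w$ weakly in $H^1(\R)$, $w_n\to w$ a.e.\ and in $L^p_{\mathrm{loc}}$ for every $p\in[2,\infty)$, and $w_n(0)\to w(0)$ by Rellich compactness of the trace on any bounded interval containing $0$. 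The limit $w$ is even.

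\textbf{Step 2: Non-vanishing via strict subadditivity.} Denote by $\widetilde{S}_\omega^\infty$, $\widetilde{K}_\omega^\infty$, $\tilde d_\infty(\omega)$ the analogues of $\widetilde{S}_\omega$, $\widetilde{K}_\omega$, $\tilde d(\omega)$ without the boundary term (i.e.\ at $\alpha=0$). The infimum $\tilde d_\infty(\omega)$ is attained by the even soliton $v_0$ given by the profile of Proposition~\ref{thm standing waves of Robin condition} at $\alpha=0$, with $v_0(0)>0$. Since $\alpha<0$, we have $\widetilde{K}_\omega(v_0)=2\alpha|v_0(0)|^2<0$, so there is a unique $\lambda_0\in(0,1)$ with $\lambda_0^4=\tilde L(v_0)/\tilde L^\infty(v_0)$ satisfying $\widetilde{K}_\omega(\lambda_0 v_0)=0$, and a direct computation yields
\[
\tilde d(\omega)\leq \widetilde{S}_\omega(\lambda_0 v_0)=\lambda_0^{6}\,\tilde d_\infty(\omega)<\tilde d_\infty(\omega).
\]
If $w=0$, then $w_n(0)\to 0$, so $\widetilde{S}_\omega^\infty(w_n)\to\tilde d(\omega)$ and $\widetilde{K}_\omega^\infty(w_n)\to 0$; rescaling $w_n$ by $\mu_n\to 1$ so that $\widetilde{K}_\omega^\infty(\mu_n w_n)=0$ gives $\tilde d_\infty(\omega)\leq \tilde d(\omega)$, contradicting the strict inequality. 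Hence $w\neq 0$.

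\textbf{Step 3: Strong convergence.} Set $r_n:=w_n-w$. Brezis-Lieb (Lemma~\ref{lmneedtoprovecompactnessinR}), the Hilbert-space splitting of weak convergence in $H^1(\R)$, and the trace convergence $w_n(0)\to w(0)$ yield
\[
\widetilde{K}_\omega(w_n)=\widetilde{K}_\omega(w)+\widetilde{K}_\omega^\infty(r_n)+o(1),\qquad \widetilde{S}_\omega(w_n)=\widetilde{S}_\omega(w)+\widetilde{S}_\omega^\infty(r_n)+o(1).
\]
By the equivalent formulation~\eqref{minimizernew} and its rescaling argument, every nonzero even $u$ with $\widetilde{K}_\omega(u)\leq 0$ satisfies $\widetilde{S}_\omega(u)\geq\tilde d(\omega)$, and analogously $\widetilde{S}_\omega^\infty(r)\geq\tilde d_\infty(\omega)$ whenever $r\neq 0$ and $\widetilde{K}_\omega^\infty(r)\leq 0$. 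Combined with the identities above and the strict inequality $\tilde d(\omega)<\tilde d_\infty(\omega)$, these constraints exclude $\widetilde{K}_\omega^\infty(r_n)\leq 0$ along any subsequence and force $\widetilde{K}_\omega(w)=0$, $\widetilde{S}_\omega(w)=\tilde d(\omega)$, and $\lVert r_n\rVert_{L^6(\R)}\to 0$. Then $\widetilde{K}_\omega^\infty(r_n)\to 0$ gives $\lVert (r_n)_x\rVert_{L^2}^2+\omega\lVert r_n\rVert_{L^2}^2\to 0$, so $r_n\to 0$ in $H^1(\R)$ and $w$ is the sought minimizer. The principal obstacle is the strict subadditivity $\tilde d(\omega)<\tilde d_\infty(\omega)$, which is the only substantive use of $\alpha<0$ and simultaneously excludes vanishing of the weak limit and nontrivial splitting of the Brezis-Lieb remainder; the asymptotic decomposition of $\widetilde{K}_\omega$ and $\widetilde{S}_\omega$ into the contributions of $w$ and $r_n$ is routine once the trace convergence at $0$ is in hand.
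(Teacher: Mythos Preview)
Your Steps 1 and 2 are essentially identical to the paper's argument: the coercivity of $\tilde L$ gives boundedness, and the strict inequality $\tilde d(\omega)<\tilde d_\infty(\omega)$ (your $\tilde d_\infty$ is the paper's $d^0(\omega)$) is derived and used in the same way to rule out $w=0$.

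Step 3, however, contains a genuine error. You assert that ``every nonzero even $u$ with $\widetilde{K}_\omega(u)\leq 0$ satisfies $\widetilde{S}_\omega(u)\geq\tilde d(\omega)$'', and the analogous statement for $\widetilde{S}_\omega^\infty$. This is false: take $u=C\varphi$ with $\varphi$ a minimizer and $C\to\infty$; then $\widetilde{K}_\omega(u)=C^2\tilde L(\varphi)-C^6\tilde N(\varphi)\to-\infty$ while $\widetilde{S}_\omega(u)=\tfrac{C^2}{2}\tilde L(\varphi)-\tfrac{C^6}{6}\tilde N(\varphi)\to-\infty$. What the equivalent formulation \eqref{minimizernew} actually yields is
\[
\widetilde{K}_\omega(u)\leq 0,\ u\neq 0\ \Longrightarrow\ \tfrac{1}{16}\lVert u\rVert_{L^6}^6=\tfrac{1}{3}\tilde N(u)\geq \tilde d(\omega),
\]
which coincides with $\widetilde{S}_\omega(u)$ only on the Nehari manifold $\widetilde{K}_\omega(u)=0$. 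Because your dichotomy in Step 3 hinges on bounding $\widetilde{S}_\omega^\infty(r_n)$ from below by $\tilde d_\infty(\omega)$, the argument as written does not close.

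The paper avoids this by working with $\tilde L$ instead of $\widetilde{S}_\omega$ in the splitting step, and it does \emph{not} invoke the strict inequality $\tilde d(\omega)<\tilde d_\infty(\omega)$ a second time. Concretely: if $\widetilde{K}_\omega(w)>0$ then $\widetilde{K}_\omega(w_n-w)<0$ eventually, whence Lemma~\ref{lm property}(3) gives $\tilde L(w_n-w)\geq 3\tilde d(\omega)$, forcing $\tilde L(w)\leq 0$, a contradiction. Thus $\widetilde{K}_\omega(w)\leq 0$, so $3\tilde d(\omega)\leq \tilde L(w)\leq\liminf\tilde L(w_n)=3\tilde d(\omega)$, and the Brezis--Lieb splitting of $\tilde L$ then gives $\tilde L(w_n-w)\to 0$. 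Your scheme can be repaired by replacing $\widetilde{S}_\omega$ with $\tfrac{1}{3}\tilde N$ (or $\tfrac{1}{3}\tilde L$) in the lower bounds and redoing the case analysis, but the paper's route through $\tilde L$ is shorter and bypasses the second appeal to subadditivity.
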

\begin{proof}
In what follows, we shall often extract subsequence without mentioning this fact explicitly. We divide the proof into two steps.

\textbf{Step 1. Weakly convergence to a nonvanishing function of minimizer sequence} We have
\[
\frac{1}{3}\tilde L(w_n)=\tilde{S}_{\omega}(w_n)-\frac{1}{6}\tilde{K}_{\omega}(w_n) \rightarrow \tilde d(\omega),
\] 
as $n \rightarrow \infty$. Then, $(w_n)$ is bounded in $H^1(\R)$ and there exists $w \in H^1(\R)$ even such that $w_n \rightharpoonup w$ in $H^1(\R)$ up to subsequence. We prove $w \neq 0$. Assume that $w \equiv 0$. Define, for $u \in H^1(\R)$,
 \begin{align*}
 S^0_{\omega}(u)&=\frac{1}{2}\norm{u_x}^2_{L^2}+\frac{\omega}{2}\norm{u}^2_{L^2} -\frac{1}{32}\norm{u}^6_{L^6},\\
 K^0_{\omega}(u)&=\norm{u_x}^2_{L^2}+\omega\norm{u}^2_{L^2}-\frac{3}{16}\norm{u}^6_{L^6}.  
 \end{align*}
 Let $\psi_{\omega}$ be minimizer of following problem
 \begin{align*}
 d^0(\omega)&=\inf\left\{S^0_{\omega}(u): u \text{ even }, u \in H^1(\R) \setminus \{0\}, K^0_{\omega}(u) = 0\right\}\\
 &=\inf\left\{\frac{1}{16}\norm{u}^6_{L^6}: u \text{ even }, u \in H^1(\R) \setminus \{0\}, K^0_{\omega}(u) \leq 0\right\}.
 \end{align*}
We have $K^0_{\omega}(w_n)=\tilde{K}_{\omega}(w_n)-2\alpha |w_n(0)|^2 \rightarrow 0$, as $n \rightarrow \infty$. Since, $\alpha<0$. we have $\tilde{K}_{\omega}(\psi_{\omega})<0$ and hence we obtain
\begin{equation}
\label{estimaterelationofminimum}
\tilde d(\omega) < \frac{1}{16}\norm{\psi_{\omega}}^6_{L^6} = d^0(\omega)
\end{equation}
We set 
\[
\lambda_n=\left(\frac{\norm{\partial_x w_n}^2_{L^2}+\omega \norm{w_n}^2_{L^2}}{\frac{3}{16}\norm{w_n}^6_{L^6}}\right)^{\frac{1}{4}}.
\]
We here remark that $0<\tilde d(\omega)=\mathop{\lim}\limits_{n\rightarrow \infty}\frac{1}{16}\norm{w_n}^6_{L^6}$. It follows that
\[
\lambda_n^4-1 = \frac{K^0_{\omega}(w_n)}{\frac{3}{16}\norm{w_n}^6_{L^6}} \rightarrow 0,
\]
as $n \rightarrow \infty$. We see that $K^0_{\omega}(\lambda_n w_n)=0$ and $\lambda_n w_n \neq 0$. By the definition of $d^0(\omega)$, we have
\[
d^0(\omega)\leq \frac{1}{16}\norm{\lambda_n w_n}^6_{L^6} \rightarrow \tilde d(\omega) \text{ as } n \rightarrow \infty.
\] 
This contradicts to \eqref{estimaterelationofminimum}. Thus, $w \neq 0$.

\textbf{Step 2. Conclude the proof} Using Lemma \ref{lmneedtoprovecompactnessinR} we have
\begin{align}
&\tilde{K}_{\omega}(w_n)-\tilde{K}_{\omega}(w_n-w)-\tilde{K}_{\omega}(w)  \rightarrow 0, \label{convergence by lemma 1} \\
&\tilde L(w_n)-\tilde L(w_n-w)-\tilde L(w) \rightarrow 0. \label{convergence by lemma 2}
\end{align}
Now, we prove $\tilde{K}_{\omega}(w) \leq 0$ by contradiction. Suppose that $\tilde{K}_{\omega}(w) >0$. By  the assumption $\tilde{K}_{\omega}(w_n) \rightarrow 0$ and \eqref{convergence by lemma 1}, we have
\[
\tilde{K}_{\omega}(w_n-w) \rightarrow -\tilde{K}_{\omega}(w) <0.
\]
Thus, $\tilde{K}_{\omega}(w_n-w)<0$ for $n$ large enough. By Lemma \ref{lm property} (3), we have $\tilde L(w_n-w) \geq 3\tilde d(\omega)$. Since $\tilde L(w_n) \rightarrow 3\tilde d(\omega)$, by \eqref{convergence by lemma 2}, we have
\[
\tilde L(w) = \mathop{\lim}\limits_{n\rightarrow\infty}(\tilde L(w_n)-\tilde L(w_n-w)) \leq 0.
\] 
Moreover, $w \neq 0$ and by Lemma \ref{lm property} (1), we have $\tilde L(w)>0$. This is a contradiction. Hence, $\tilde{K}_{\omega}(w)<0$. By Lemma \ref{lm property} (2), (3) and weakly lower semicontinuity of $\tilde L$, we have 
\[
3\tilde d(\omega)\leq \tilde L(w) \leq \mathop{\lim}\limits_{n\rightarrow\infty}\inf \tilde L(w_n)=3\tilde d(\omega).
\]
Thus, $\tilde L(w)=3\tilde d(\omega)$. Combining with \eqref{convergence by lemma 2}, we have $\tilde L(w_n-w) \rightarrow 0$, as $n \rightarrow \infty$. By Lemma \ref{lm property} (1), we have $w_n \rightarrow w$ strongly in $H^1(\R)$. Hence, $w$ is a minimizer of \eqref{minprob1}. This completes the proof. 
\end{proof}

To prove the stability statement (1) for $\alpha<0$ in Theorem \ref{stable and instale of SDW for eq 2}, we will use similar arguments as in the work of Colin and Ohta \cite{CoOh06}. We need the following property.
\begin{lemma}\label{convegence to grouns state}
Let $\alpha<0$, $\omega >\alpha^2$. If a sequence $(v_n) \subset H^1(\R^{+})$ satisfies 
\begin{align}
S_{\omega}(v_n) & \rightarrow d(\omega),\label{assumption1} \\ 
K_{\omega}(v_n) &\rightarrow 0,\label{assumption}
\end{align}
then there exist a constant $\theta_0 \in \R$ such that  $v_n\rightarrow e^{i\theta_0}\varphi$, up to subsequence, where $\varphi$ is defined as in Proposition \ref{variation characteristic problem}.
\end{lemma}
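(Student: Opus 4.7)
The plan is to reduce everything to the full-line even minimization problem \eqref{minprob1} and then invoke the compactness result of Proposition \ref{procompactness property}. The idea is that the assumptions on $(v_n)$ transfer naturally to even reflections in $H^1(\R)$, and the uniqueness of minimizers up to phase (Propositions \ref{new proposition 1} and \ref{variation characteristic problem}) forces the limit to be exactly $e^{i\theta_0}\varphi$ restricted to $\R^+$.

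More concretely, I would first define $w_n \in H^1(\R)$ as the even reflection
\[
w_n(x) = \begin{cases} v_n(x) & \text{if } x \geq 0, \\ v_n(-x) & \text{if } x < 0. \end{cases}
\]
A direct computation (as in the proof of Proposition \ref{proposition vartiational 1}) gives $\widetilde{S}_{\omega}(w_n) = 2 S_{\omega}(v_n)$ and $\widetilde{K}_{\omega}(w_n) = 2 K_{\omega}(v_n)$, so the hypotheses \eqref{assumption1} and \eqref{assumption}, together with the identity $\widetilde{d}(\omega) = 2 d(\omega)$ proved in Proposition \ref{proposition vartiational 1}, yield
\[
\widetilde{S}_{\omega}(w_n) \to \widetilde{d}(\omega), \qquad \widetilde{K}_{\omega}(w_n) \to 0.
\]

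Next I would apply Proposition \ref{procompactness property} to the even sequence $(w_n)$: up to a subsequence there exists a minimizer $w$ of \eqref{minprob1} such that $w_n \to w$ strongly in $H^1(\R)$. By Proposition \ref{new proposition 1}, the positive minimizer of \eqref{minprob1} is explicit and unique, and a standard argument (any minimizer $w$ yields a positive minimizer $|w|$, which by uniqueness up to phase must equal $e^{i\theta_0}$ times the explicit profile for some $\theta_0 \in \R$, exactly as exploited in the proof of Proposition \ref{variation characteristic problem}) shows that $w = e^{i\theta_0}\widetilde{\varphi}$, where $\widetilde{\varphi}$ is the even extension to $\R$ of the profile $\varphi$ from Proposition \ref{variation characteristic problem}. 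Restricting to $\R^+$ gives $v_n \to e^{i\theta_0}\varphi$ in $H^1(\R^+)$, since $\norm{v_n - e^{i\theta_0}\varphi}_{H^1(\R^+)} \leq \norm{w_n - e^{i\theta_0}\widetilde{\varphi}}_{H^1(\R)} \to 0$.

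The main obstacle I anticipate is not the compactness step itself, which is handled by Proposition \ref{procompactness property}, but rather the bookkeeping around the identification of the limit: one must verify carefully that every minimizer of \eqref{minprob1} (not just the positive one) is of the form $e^{i\theta}\widetilde{\varphi}$ under the sign condition $\alpha < 0$, so that no spurious non-phase-shift symmetries appear. This relies on the observation that the Nehari minimizer $w$ satisfies the Euler--Lagrange equation \eqref{3}, hence $|w|$ is also a (nonnegative, even) minimizer and must coincide with $\widetilde{\varphi}$; then $w/|w|$ is locally constant, and connectedness of the support of $\widetilde{\varphi}$ (together with its positivity everywhere) forces a single global phase $e^{i\theta_0}$.
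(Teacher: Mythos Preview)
Your proposal is correct and follows essentially the same route as the paper: even reflection to reduce to the full-line problem, application of Proposition~\ref{procompactness property}, identification of the limit via Proposition~\ref{new proposition 1}, and restriction back to $\R^{+}$. Your final paragraph on why every minimizer of \eqref{minprob1} is a phase multiple of $\widetilde{\varphi}$ is in fact more careful than the paper, which simply asserts this step.
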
  

\begin{proof}
Define the sequence $(w_n) \subset H^1(\R)$ as follows,
\[
w_n(x)=\left\{\begin{matrix}
v_n(x) \text{ for } x>0 ,\\ v_n(-x) \text{ for } x<0.
\end{matrix}\right.
\]
We can check that
\begin{align*}
\widetilde{S}_{\omega}(w_n)=2S_{\omega}(v_n) &\rightarrow 2d(\omega)=\tilde{d}(\omega),\\
\widetilde{K}_{\omega}(w_n)=2K_{\omega}(v_n) &\rightarrow 0,
\end{align*}
as $n \rightarrow \infty$. Using Proposition \ref{procompactness property}, there exists a minimizer $w_0$ of \eqref{minprob1} such that $w_n \rightarrow w_0$ strongly in $H^1(\R)$, up to subsequence. For convenience, we assume that $w_n \rightarrow w_0$ strongly in $H^1(\R)$. By Proposition \ref{new proposition 1}, there exists a constant $\theta_0\in \R$ such that 
\[
w_0=e^{i\theta_0}\tilde{\varphi},
\]
where $\tilde{\varphi}$ is defined by
\begin{align}\label{111}
\tilde{\varphi}(x)=\left\{\begin{matrix}
\varphi(x) \text{ for } x>0 ,\\ \varphi(-x) \text{ for } x<0.
\end{matrix}\right.
\end{align}
Hence, the sequence $(v_n)$ is the restriction of the sequence $(w_n)$ on $\R^{+}$, and satisfies 
\[
v_n \rightarrow e^{i\theta_0}\varphi, \text{ strongly in } H^1(\R^{+}),
\]  
up to subsequence. This completes the proof.
\end{proof}

Define 
\begin{align*}
\mathcal{A}^{+}_{\omega} &= \left\{v \in H^1(\R^{+})\setminus \left\{0\right\} : S_{\omega}(v)<d(\omega),K_{\omega}(v)>0\right\},\\
\mathcal{A}^{-}_{\omega} &= \left\{v \in H^1(\R^{+})\setminus \left\{0\right\} : S_{\omega}(v)<d(\omega),K_{\omega}(v)<0\right\},\\
\mathcal{B}^{+}_{\omega} &= \left\{v \in H^1(\R^{+})\setminus \left\{0\right\} : S_{\omega}(v)<d(\omega),N(v)<3d(\omega)\right\},\\
\mathcal{B}^{-}_{\omega} &= \left\{v \in H^1(\R^{+})\setminus \left\{0\right\} : S_{\omega}(v)<d(\omega),N(v)>3d(\omega)\right\}.
\end{align*}
We have the following result.
\begin{lemma}\label{lemmainvariant under flow}
Let $\omega,\alpha\in \R^2$ such that $\alpha<0$ and $\omega>\alpha^2$.
\begin{itemize}
\item[(1)] The sets $\mathcal{A}^{+}_{\omega}$ and $\mathcal{A}^{-}_{\omega}$ are invariant under the flow of \eqref{eq first}.
\item[(2)] $\mathcal{A}^{+}_{\omega}=\mathcal{B}^{+}_{\omega}$ and $\mathcal{A}^{-}_{\omega}=\mathcal{B}^{-}_{\omega}$.
\end{itemize}
\end{lemma}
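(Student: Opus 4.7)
The plan is to exploit two ingredients: first, that the action $S_\omega(v) = E(v) + \frac{\omega}{2}\|v\|_{L^2(\R^{+})}^2$ is conserved along the flow of \eqref{eq first} by Assumption \ref{assumptionB}, being a linear combination of the conserved mass and energy; and second, the purely algebraic identities
\begin{align*}
L(v) &= 3 S_\omega(v) - \tfrac{1}{2} K_\omega(v), &
N(v) &= 3 S_\omega(v) - \tfrac{3}{2} K_\omega(v),
\end{align*}
obtained by inverting the linear relations $S_\omega = \tfrac{1}{2}L - \tfrac{1}{6}N$ and $K_\omega = L - N$ already recorded just after \eqref{L}.

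For part (1), I fix $v_0 \in \mathcal{A}^{+}_{\omega}$ and let $v$ denote the corresponding solution of \eqref{eq first}. Conservation of mass gives $\|v(t)\|_{L^2(\R^+)} = \|v_0\|_{L^2(\R^+)} > 0$, so $v(t) \neq 0$ on the whole lifespan, while conservation of $S_\omega$ gives $S_\omega(v(t)) < d(\omega)$ for all $t$. To see that $K_\omega$ cannot switch sign, I argue by contradiction using the continuity of $t \mapsto K_\omega(v(t))$ (inherited from the $H^1$-continuity of the flow supplied by Assumption \ref{assumptionB} and the continuity of $K_\omega$ on $H^1(\R^{+})$): if $K_\omega(v(t))$ ever became non-positive, an intermediate-value argument would produce a time $t_1$ with $v(t_1) \neq 0$ and $K_\omega(v(t_1)) = 0$, and the variational definition of $d(\omega)$ would then force $S_\omega(v(t_1)) \geq d(\omega)$, contradicting the conservation. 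The invariance of $\mathcal{A}^{-}_{\omega}$ follows by the symmetric argument approaching $K_\omega=0$ from the negative side.

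For part (2), the inclusions $\mathcal{A}^{\pm}_{\omega} \subseteq \mathcal{B}^{\pm}_{\omega}$ fall out of the identities directly. If $v \in \mathcal{A}^{+}_{\omega}$ then $N(v) = 3 S_\omega(v) - \tfrac{3}{2} K_\omega(v) < 3 d(\omega)$; if $v \in \mathcal{A}^{-}_{\omega}$ then Lemma \ref{lm property}(3) gives $L(v) > 3 d(\omega)$, hence $N(v) = L(v) - K_\omega(v) > 3 d(\omega)$. For the reverse inclusions I proceed by contradiction. Given $v \in \mathcal{B}^{+}_{\omega}$: the case $K_\omega(v) = 0$ is excluded because the definition of $d(\omega)$ would then give $S_\omega(v) \geq d(\omega)$, and the case $K_\omega(v) < 0$ is excluded by Lemma \ref{lm property}(3), which would force $N(v) > 3 d(\omega)$; hence $K_\omega(v) > 0$. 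Given $v \in \mathcal{B}^{-}_{\omega}$: if $K_\omega(v) \geq 0$, then $N(v) = 3 S_\omega(v) - \tfrac{3}{2} K_\omega(v) \leq 3 S_\omega(v) < 3 d(\omega)$, contradicting $v \in \mathcal{B}^{-}_{\omega}$; hence $K_\omega(v) < 0$. I expect no serious obstacle here: modulo the $H^1$ continuous dependence packaged into Assumption \ref{assumptionB}, the proof reduces to the standard Payne--Sattinger variational manipulation.
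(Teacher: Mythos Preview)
Your proof is correct and follows essentially the same route as the paper: conservation of $S_\omega$ combined with the Nehari characterization of $d(\omega)$ and continuity of $K_\omega$ for part (1), and the linear relations among $S_\omega$, $K_\omega$, $L$, $N$ together with Lemma~\ref{lm property}(3) for part (2). Your identities $L = 3S_\omega - \tfrac{1}{2}K_\omega$, $N = 3S_\omega - \tfrac{3}{2}K_\omega$ are in fact the correct ones (the paper's version has a harmless coefficient typo), and your slightly more careful separation of the cases $K_\omega(v)=0$ versus $K_\omega(v)<0$ in the $\mathcal{B}^{+}_\omega \subset \mathcal{A}^{+}_\omega$ step is cleaner than the paper's phrasing.
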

\begin{proof}
(1) Let $u_0 \in \mathcal{A}^{+}_{\omega}$ and $u(t)$ the associated solution for \eqref{eq first} on $(T_{min},T_{max})$. By $u_0 \neq 0$ and the conservation laws, we see that $S_{\omega}(u(t)) = S_{\omega}(u_0)<d(\omega)$ for $t \in (T_{min},T_{max})$. Moreover, by definition of $d(\omega)$ we have $K_{\omega}(u(t))\neq 0$ on $(T_{min},T_{max})$. Since the function $t \mapsto K_{\omega}(u(t))$ is continuous, we have $K_{\omega}(u(t))>0$ on $(T_{min},T_{max})$. Hence, $\mathcal{A}^{+}_{\omega}$ is invariant under flow of \eqref{eq first}. By the same way, $\mathcal{A}^{-}_{\omega}$ is invariant under flow of \eqref{eq first}.\\
(2) If $v \in \mathcal{A}^{+}_{\omega}$ then by \eqref{L}, \eqref{N} we have $N(v)=3S_{\omega}(v)-2K_{\omega}(v) <3d(\omega)$, which shows $v \in \mathcal{B}^{+}_{\omega}$, hence $\mathcal{A}^{+}_{\omega} \subset \mathcal{B}^{+}_{\omega}$. Now, let $v \in \mathcal{B}^{+}_{\omega}$. We show $K_{\omega}(v)>0$ by contradiction. Suppose that $K_{\omega}(v)\leq 0$. Then, by Lemma \ref{lm property} (3), $L(v) \geq 3d(\omega)$. Thus, by \eqref{L} and \eqref{N}, we have 
\[
S_{\omega}(v) = \frac{1}{2}L(v)-\frac{1}{6}N(v) \geq d(\omega),
\]        
which contradicts $S_{\omega}(v)<d(\omega)$. Therefore, we have $K_{\omega}(v)>0$, which shows $v \in \mathcal{A}^{+}_{\omega}$ and $\mathcal{B}^{+}_{\omega} \subset \mathcal{A}^{+}_{\omega}$. Next, if $v \in \mathcal{A}^{-}_{\omega}$, then by Lemma \ref{lm property} (3), $L(v) > 3d(\omega)$. Thus, by \eqref{L} and \eqref{N}, we have $N(v)=L(v)-K_{\omega}(v)>3d(\omega)$, which shows $v \in \mathcal{B}^{-}_{\omega}$. Thus, $\mathcal{A}^{-}_{\omega} \subset \mathcal{B}^{-}_{\omega}$. Finally, if $v \in \mathcal{B}^{-}_{\omega}$, then by \eqref{L} and \eqref{N}, we have $2K_{\omega}(v)=3S_{\omega}(v)-N(v) < 3d(\omega)-3d(\omega)=0$, which shows $v \in \mathcal{A}^{-}_{\omega}$, hence, $\mathcal{B}^{-}_{\omega} \subset \mathcal{A}^{-}_{\omega}$. This completes the proof.    
\end{proof}
From Proposition \ref{variation characteristic problem}, we have
\[
d(\omega)=S_{\omega}(\varphi).
\] 
Since $\alpha<0$, we see that
\[
d''(\omega)=\partial_{\omega}\norm{\varphi}^2_{L^2(\R^{+})}=\frac{1}{2}\partial_{\omega}\norm{\tilde{\varphi}}^2_{L^2(\R)}>0,
\] 
where $\tilde{\varphi}$ is defined as \eqref{111} and we know from \cite{FuOhOz08}, \cite{FuJe08} that 
\[
\partial_{\omega}\norm{\tilde{\varphi}}^2_{L^2(\R)}>0,
\]
for $\alpha<0$. We define the function $h : (-\varepsilon_0,\varepsilon_0) \rightarrow \R$ by
\[
h(\tau)=d(\omega\pm\tau),
\] 
for $\varepsilon_0>0$ sufficiently small such that $h''(\tau)>0$ and the sign $+$ or $-$ is selected such that $h'(\tau)>0$ for $\tau \in (-\varepsilon_0,\varepsilon_0)$. Without loss of generality, we can assume 
\[
h(\tau)=d(\omega+\tau).
\] 
\begin{lemma}\label{lm property of h}
Let $(\omega,\alpha) \in \R^2$ such that $\omega>\alpha^2$ and let $h$ be defined as above. Then, for any $\varepsilon \in (0,\varepsilon_0)$, there exists $\delta>0$ such that if $v_0 \in H^1(\R^{+})$ satisfies $\norm{v_0-\varphi}_{H^1(\R^{+})}<\delta$, then the solution $v$ of \eqref{eq first} with $v(0)=v_0$ satisfies $3h(-\varepsilon)<N(v(t))<3h(\varepsilon)$ for all $t \in (T_{min},T_{max})$.   
\end{lemma}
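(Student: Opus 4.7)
The plan is to exhibit open subsets of $H^1(\R^+)$ containing $\varphi$, invariant under the flow of \eqref{eq first}, whose intersection forces the inequalities $3h(-\varepsilon)<N(v(t))<3h(\varepsilon)$ automatically. The natural candidates are $\mathcal{A}^+_{\omega+\varepsilon}$ and $\mathcal{A}^-_{\omega-\varepsilon}$, because Lemma \ref{lemmainvariant under flow}(2) turns the qualitative sign condition on $K_{\omega'}$ into the quantitative two-sided control on $N$: once $v(t)$ is trapped in $\mathcal{A}^+_{\omega+\varepsilon}\cap\mathcal{A}^-_{\omega-\varepsilon}=\mathcal{B}^+_{\omega+\varepsilon}\cap\mathcal{B}^-_{\omega-\varepsilon}$, the inequalities $N(v(t))<3d(\omega+\varepsilon)=3h(\varepsilon)$ and $N(v(t))>3d(\omega-\varepsilon)=3h(-\varepsilon)$ hold by definition.

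The first substantive step is to check $\varphi\in\mathcal{A}^+_{\omega+\varepsilon}\cap\mathcal{A}^-_{\omega-\varepsilon}$. Starting from $S_\omega(\varphi)=d(\omega)=h(0)$ and $K_\omega(\varphi)=0$, a direct computation gives
\[
S_{\omega\pm\varepsilon}(\varphi)=h(0)\pm\tfrac{\varepsilon}{2}\|\varphi\|_{L^2(\R^+)}^2,\qquad K_{\omega\pm\varepsilon}(\varphi)=\pm\varepsilon\,\|\varphi\|_{L^2(\R^+)}^2.
\]
The standard Pohozaev-type identity (which follows from the fact that $\varphi$ is a genuine critical point of $S_\omega$, so the Lagrange multiplier drops out when differentiating $\omega\mapsto S_\omega(\varphi_\omega)$) yields $h'(0)=d'(\omega)=\tfrac12\|\varphi\|_{L^2(\R^+)}^2$. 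The strict convexity $h''>0$ on $(-\varepsilon_0,\varepsilon_0)$ then gives
\[
h(\pm\varepsilon)>h(0)\pm\varepsilon\,h'(0)=S_{\omega\pm\varepsilon}(\varphi),
\]
so $S_{\omega+\varepsilon}(\varphi)<d(\omega+\varepsilon)$ with $K_{\omega+\varepsilon}(\varphi)>0$, and $S_{\omega-\varepsilon}(\varphi)<d(\omega-\varepsilon)$ with $K_{\omega-\varepsilon}(\varphi)<0$. This places $\varphi$ in both $\mathcal{A}^+_{\omega+\varepsilon}$ and $\mathcal{A}^-_{\omega-\varepsilon}$.

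The defining inequalities of $\mathcal{A}^\pm_{\omega'}$ are open in the $H^1(\R^+)$ topology, because the $L^p$ norms, the trace $v\mapsto v(0)$, and hence the functionals $S_{\omega'}$ and $K_{\omega'}$ are all continuous on $H^1(\R^+)$. Hence there exists $\delta>0$ such that $\|v_0-\varphi\|_{H^1(\R^+)}<\delta$ implies $v_0\in\mathcal{A}^+_{\omega+\varepsilon}\cap\mathcal{A}^-_{\omega-\varepsilon}$. Applying Lemma \ref{lemmainvariant under flow}(1) successively at the frequencies $\omega+\varepsilon$ and $\omega-\varepsilon$ (its proof uses only conservation of $S_{\omega'}$, which holds at any frequency since mass and energy are both conserved, together with continuity of $t\mapsto K_{\omega'}(v(t))$) keeps $v(t)$ in this intersection throughout $(T_{\min},T_{\max})$. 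Lemma \ref{lemmainvariant under flow}(2) at $\omega\pm\varepsilon$ then converts this into the desired two-sided bound on $N(v(t))$.

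No real obstacle is expected. The only mildly delicate point is the identity $h'(0)=\tfrac12\|\varphi\|_{L^2(\R^+)}^2$, which is classical once one knows $\varphi$ solves the stationary equation \eqref{eq sdw of robbin condition}; the rest is bookkeeping that repackages Lemma \ref{lemmainvariant under flow} at the shifted frequencies $\omega\pm\varepsilon$, which remain admissible (in particular $\omega\pm\varepsilon>\alpha^2$) provided $\varepsilon_0$ was chosen small enough.
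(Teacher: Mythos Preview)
Your proposal is correct and follows essentially the same approach as the paper: both arguments use the identity $h'(0)=\tfrac12\|\varphi\|_{L^2(\R^+)}^2$ together with the strict convexity $h''>0$ to obtain $S_{\omega\pm\varepsilon}(\varphi)<h(\pm\varepsilon)$, and then invoke the invariance and the equality $\mathcal{A}^\pm_{\omega\pm\varepsilon}=\mathcal{B}^\pm_{\omega\pm\varepsilon}$ from Lemma~\ref{lemmainvariant under flow} at the shifted frequencies. The only cosmetic difference is that you place $\varphi$ directly in $\mathcal{A}^+_{\omega+\varepsilon}\cap\mathcal{A}^-_{\omega-\varepsilon}$ and appeal to openness, whereas the paper works with $u_0$ and $O(\delta)$ corrections to land in $\mathcal{B}^+_{\omega+\varepsilon}\cap\mathcal{B}^-_{\omega-\varepsilon}$; the content is the same.
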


\begin{proof}
The proof of the above lemma is similar to the one of \cite{CoOh06} or \cite{Sh83}. Let $\varepsilon \in (0, \varepsilon_0)$. Since $h$ is increasing, we have $h(-\varepsilon)<h(0)<h(\varepsilon)$. Moreover, by $K_{\omega}(\varphi)=0$ and \eqref{N}, \eqref{L}, we see that $3h(0)=3d(\omega)=3S_{\omega}(\varphi)=N(\varphi)$. Thus, if $u_0\in 
H^1(\R^{+})$ satisfies $\norm{u_0-\varphi}_{H^1(\R^{+})}<\delta$ then we have $3h(0)=N(u_0)+O(\delta)$ and $3h(-\varepsilon)<N(u_0)<3h(\varepsilon)$ for sufficiently small $\delta>0$. Since $h(\pm \varepsilon)=d(\omega \pm \varepsilon)$ and the set $\mathcal{B}^{\pm}_{\omega}$ are invariant under the flow of \eqref{eq first} by Lemma \ref{lemmainvariant under flow}, to conclude the proof, we only have to show that there exists $\delta>0$ such that if $u_0 \in H^1(\R^{+})$ satisfies $\norm{u_0-\varphi}_{H^1(\R^{+})}<\delta$ then $S_{\omega\pm \varepsilon}(u_0)<h(\pm \varepsilon)$. Assume that $u_0\in H^1(\R^{+})$ satisfies $\norm{u_0-\varphi}_{H^1(\R^{+})}<\delta$. We have
\begin{align*}
S_{\omega\pm \varepsilon}(u_0)&=S_{\omega\pm\varepsilon}(\varphi)+O(\delta)\\
&= S_{\omega}(\varphi)\pm \varepsilon M(\varphi)+O(\delta)\\
&=h(0)\pm \varepsilon h'(0)+O(\delta).  
\end{align*}
On the other hand, by the Taylor expansion, there exists $\tau_1=\tau_1(\varepsilon)\in (-\varepsilon_0,\varepsilon_0)$ such that
\[
h(\pm\varepsilon)=h(0)\pm\varepsilon h'(0)+\frac{\varepsilon^2}{2}h''(\tau_1).
\]
Since $h''(\tau_1)>0$ by definition of $h$, we see that there exists $\delta>0$ such that if $u_0\in H^1(\R^{+})$ satisfies $\norm{u_0-\varphi}_{H^1(\R^{+})}<\delta$ then $S_{\omega\pm \varepsilon}(u_0)<h(\pm\varepsilon)$. This completes the proof.
\end{proof}

\begin{proof}[Proof of Theorem \ref{stable and instale of SDW for eq 2} (1)]
Assume that $e^{i\omega t}\varphi$ is not stable for \eqref{eq first}. Then, there exists a constant $\varepsilon_1>0$, a sequence of solutions $(v^n)$ to \eqref{eq first}, and a sequence $\left\{t_n\right\} \in (0,\infty)$ such that
\begin{equation} 
v_n(0) \rightarrow \varphi \text{ in } H^1(\R^{+}),\\
\mathop{\inf}\limits_{\theta\in \R}\norm{v_n(t_n)-e^{i\theta}\varphi}_{H^1(\R^{+})}\geq \varepsilon_1. \label{eq 4.5}
\end{equation}
By using the conservation laws of solutions of \eqref{eq first}, we have
\begin{equation}\label{eq 4.6}
S_{\omega}(v_n(t_n))=S_{\omega}(v_n(0))\rightarrow S_{\omega}(\varphi)=d(\omega).
\end{equation}
Using Lemma \ref{lm property of h}, we have
\begin{equation}\label{eq 4.7}
N(v_n(t_n)) \rightarrow 3d(\omega). 
\end{equation}
Combined \eqref{eq 4.6} and \eqref{eq 4.7}, we have
\[
K_{\omega}(v_n(t_n))=2S_{\omega}(v_n(t_n))-\frac{2}{3}N(v_n(t_n))\rightarrow 0.
\]
Therefore, using Lemma \ref{convegence to grouns state}, there exists $\theta_0 \in \R$ such that $(v_n(t_n,.))$ has a subsequence (we denote it by the same letter) that converges to $e^{i\theta_0}\varphi$ in $H^1(\R^{+})$, where $\varphi$ is defined as in Proposition \ref{variation characteristic problem}. Hence, we have
\begin{equation}
\mathop{\inf}\limits_{\theta \in\R}\norm{v_n(t_n)-e^{i\theta}\varphi}_{H^1(\R^{+})} \rightarrow 0, 
\end{equation} 
as $n \rightarrow \infty$, this contradicts \eqref{eq 4.5}. Hence, we obtain the desired result.
\end{proof}
Next, we give the proof of Theorem \ref{stable and instale of SDW for eq 2} (2), using similar arguments as in \cite{Stefan09}.

Assume $\alpha>0$. Let $e^{i\omega t}\varphi$ be the standing wave solution of \eqref{eq first}. Introduce the scaling
\[
v_{\lambda}(x)=\lambda^{\frac{1}{2}}v(\lambda x).
\]
Let $S_{\omega}$, $K_{\omega}$ be defined as in Proposition \ref{variation characteristic problem}, for convenience, we will remove the index $\omega$. Define
\begin{align*}
P(v)&:=\frac{\partial}{\partial\lambda}S(v_{\lambda})\vert_{\lambda=1}=\norm{v_x}^2_{L^2(\R^{+})}-\frac{1}{16}\norm{v}^6_{L^6(\R^{+})}+\frac{\alpha}{2}|v(0)|^2.
\end{align*}
In the following lemma, we investigate the behaviour of the above functional under scaling.
\begin{lemma}
\label{property of function under scaling 1}
Let $v \in H^1(\R^{+})\setminus \left\{0\right\}$ be such that $v(0) \neq 0$, $P(v)\leq 0$. Then there exists $\lambda_0 \in (0,1]$ such that
\begin{itemize}
\item[(i)] $P(v_{\lambda_0})=0$,
\item[(ii)] $\lambda_0=1$ if only if $P(v)=0$,
\item[(iii)] $\frac{\partial}{\partial\lambda}S(v_{\lambda})=\frac{1}{\lambda}P(v_{\lambda})$,
\item[(iv)]$\frac{\partial}{\partial\lambda}S(v_{\lambda})>0$ on $(0,\lambda_0)$ and $\frac{\partial}{\partial\lambda}S(v_{\lambda})<0$ on $(\lambda_0,\infty)$,
\item[(v)] The function $\lambda \rightarrow S(v_{\lambda})$ is concave on $(\lambda_0,\infty)$. 
\end{itemize}
\end{lemma}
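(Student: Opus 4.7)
The plan is to reduce everything to an explicit computation of $S(v_\lambda)$ as a polynomial in $\lambda$, after which all five assertions fall out from elementary algebra. The scaling $v_\lambda(x)=\lambda^{1/2}v(\lambda x)$ preserves the $L^2$ norm and produces the following factors on the remaining pieces of $S$: a factor $\lambda^2$ on both $\norm{v_x}^2_{L^2(\R^+)}$ and $\norm{v}^6_{L^6(\R^+)}$ (after an elementary change of variables), and a factor $\lambda$ on the boundary term $|v(0)|^2$. Introducing the shorthand
\[
A:=\norm{v_x}^2_{L^2(\R^+)} - \tfrac{1}{16}\norm{v}^6_{L^6(\R^+)}, \qquad B:=\tfrac{\alpha}{2}|v(0)|^2,
\]
one obtains
\[
S(v_\lambda)=\tfrac{\lambda^2}{2}A + \lambda B + \tfrac{\omega}{2}\norm{v}^2_{L^2(\R^+)}, \qquad P(v_\lambda)=\lambda^2 A + \lambda B,
\]
and differentiation immediately gives $\tfrac{\partial}{\partial\lambda}S(v_\lambda)=\lambda A+B=\tfrac{1}{\lambda}P(v_\lambda)$, which is (iii).

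Next, the hypothesis $P(v)\leq 0$, which reads $A+B\leq 0$, combined with the fact that $B>0$ (using $\alpha>0$ and $v(0)\neq 0$), forces $A<0$. The equation $P(v_\lambda)=\lambda(\lambda A+B)=0$ then has the unique positive root $\lambda_0:=-B/A$; to conclude (i) it remains to check $\lambda_0\leq 1$, which after multiplying by the negative number $A$ is exactly the hypothesis $A+B\leq 0$. The same computation gives (ii), since $\lambda_0=1$ is equivalent to $A+B=0$, i.e.\ to $P(v)=0$. Statement (iv) is then immediate from the explicit affine expression $\tfrac{\partial}{\partial\lambda}S(v_\lambda)=\lambda A+B$: this function has negative slope $A$ and vanishes at $\lambda_0$, hence is positive on $(0,\lambda_0)$ and negative on $(\lambda_0,\infty)$. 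For (v), the second derivative $\tfrac{\partial^2}{\partial\lambda^2}S(v_\lambda)=A$ is a negative constant, so $\lambda\mapsto S(v_\lambda)$ is in fact concave on all of $(0,\infty)$, and a fortiori on $(\lambda_0,\infty)$.

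I do not foresee any substantive obstacle: once the scaling laws are recorded, the lemma is a one-variable algebra exercise about the quadratic $\lambda A+B$. The only point requiring a little care is that $B$ be strictly positive, which is where both parts of the hypothesis $\alpha>0$ and $v(0)\neq 0$ are genuinely used; this is precisely what keeps the linear term in $P(v_\lambda)$ from vanishing and prevents $\lambda_0$ from degenerating to $0$ or escaping the interval $(0,1]$.
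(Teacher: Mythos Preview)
Your proposal is correct and follows essentially the same approach as the paper: both arguments compute $P(v_\lambda)$ explicitly as a quadratic in $\lambda$ and exploit that the boundary term is strictly positive (from the standing assumption $\alpha>0$ together with $v(0)\neq 0$). The only cosmetic difference is that you introduce the shorthand $A,B$ and solve $P(v_\lambda)=0$ explicitly for $\lambda_0=-B/A$, whereas the paper obtains $\lambda_0$ via the intermediate value theorem and then factorizes $P(v_\lambda)=\tfrac{\lambda\alpha(\lambda_0-\lambda)}{2\lambda_0}|v(0)|^2$ to read off (iv); your explicit formula makes the uniqueness of $\lambda_0$ and the sign analysis slightly more transparent.
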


\begin{proof}
A simple calculation leads to 
\[
P(v_{\lambda})=\lambda^2\norm{v_x}^2_{L^2(\R^{+})}-\frac{\lambda^2}{16}\norm{v}^6_{L^6(\R^{+})}+\frac{\lambda\alpha}{2}|v(0)|^2.
\]
Then, for $\lambda>0$ small enough, we have
\[
P(v_{\lambda})>0.
\]
By continuity of $P$, there exists $\lambda_0\in (0,1]$ such that $P(v_{\lambda_0})=0$. Hence (i) is proved. If $\lambda_0=1$ then $P(v)=1$. Conversely, if $P(v)=0$ then
\[
0=P(v_{\lambda_0})=\lambda_0^2P(v)+\frac{\lambda_0-\lambda_0^2}{2}\alpha|v(0)|^2=\frac{\lambda_0-\lambda_0^2}{2}\alpha|v(0)|^2.
\]
By the assumption $v(0) \neq 0$, we have $\lambda_0=1$, hence (ii) is proved. Item (iii) is obtained by a simple calculation. To obtain (iv), we use (iii). We have
\begin{align*}
P(v_{\lambda})&= \lambda^2\lambda_0^{-2}P(v_{\lambda_0})+\left(\frac{\lambda\alpha}{2}-\frac{\lambda^2\lambda_0^{-1}\alpha}{2}\right)|v(0)|^2\\
&= \frac{\lambda\alpha(\lambda_0-\lambda)}{2\lambda_0}|v(0)|^2.
\end{align*}
Hence, $P(v_{\lambda})>0$ if $\lambda<\lambda_0$ and $P(v_{\lambda})<0$ if $\lambda>\lambda_0$. This proves (iv). Finally, we have
\[
\frac{\partial^2}{\partial^2\lambda}S(v_{\lambda})=P(v)-\frac{\alpha}{2}|v(0)|^2<0.
\]
This proves (v).
\end{proof}
In the case of functions such that $v(0)=0$, we have the following lemma.
\begin{lemma}
\label{property of function under scaling 2}
Let $v \in H^1(\R^{+})\setminus \left\{0\right\}$, $v(0)=0$ and $P(v) = 0$ then we have
\[
S(v_{\lambda})=S(v) \quad \text{ for all } \lambda >0.
\]
\end{lemma}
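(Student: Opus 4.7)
The plan is to prove the identity by direct computation: first expand $S(v_\lambda)$ using the scaling $v_\lambda(x)=\lambda^{1/2}v(\lambda x)$, then exploit the two hypotheses $v(0)=0$ and $P(v)=0$ to collapse the $\lambda$-dependence.

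The first step is to compute how each term of $S$ scales. Using the change of variables $y=\lambda x$ one finds
\[
\|v_\lambda\|_{L^2(\R^+)}^2=\|v\|_{L^2(\R^+)}^2,\qquad \|(v_\lambda)_x\|_{L^2(\R^+)}^2=\lambda^2\|v_x\|_{L^2(\R^+)}^2,
\]
\[
\|v_\lambda\|_{L^6(\R^+)}^6=\lambda^2\|v\|_{L^6(\R^+)}^6,\qquad |v_\lambda(0)|^2=\lambda|v(0)|^2.
\]
Substituting these into the definition of $S_\omega$ and using $v(0)=0$ (so the boundary term vanishes for every $\lambda$) yields
\[
S(v_\lambda)=\frac{\lambda^2}{2}\|v_x\|_{L^2(\R^+)}^2+\frac{\omega}{2}\|v\|_{L^2(\R^+)}^2-\frac{\lambda^2}{32}\|v\|_{L^6(\R^+)}^6.
\]

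The second step is to use the constraint $P(v)=0$, which with $v(0)=0$ reduces to $\|v_x\|_{L^2(\R^+)}^2=\tfrac{1}{16}\|v\|_{L^6(\R^+)}^6$. Plugging this identity into the $\lambda^2$-coefficient above gives
\[
\frac{\lambda^2}{2}\cdot\frac{1}{16}\|v\|_{L^6(\R^+)}^6-\frac{\lambda^2}{32}\|v\|_{L^6(\R^+)}^6=0,
\]
so the $\lambda^2$-terms cancel and
\[
S(v_\lambda)=\frac{\omega}{2}\|v\|_{L^2(\R^+)}^2,
\]
independent of $\lambda>0$. Evaluating at $\lambda=1$ gives the same value for $S(v)$, hence $S(v_\lambda)=S(v)$ for all $\lambda>0$.

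There is no real obstacle here: the statement is essentially an algebraic consequence of the scaling laws once the boundary contribution is killed by $v(0)=0$. The only thing to be a little careful about is not forgetting the $\omega\|v\|_{L^2}^2/2$ piece, which is scale-invariant and therefore survives as the common value of $S(v_\lambda)$.
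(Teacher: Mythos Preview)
Your proof is correct. The paper's proof is essentially the same computation phrased differentially: it uses the identity $\frac{\partial}{\partial\lambda}S(v_\lambda)=\frac{1}{\lambda}P(v_\lambda)=\lambda P(v)=0$ (the middle equality being exactly where $v(0)=0$ enters) to conclude that $S(v_\lambda)$ is constant, whereas you expand $S(v_\lambda)$ explicitly and cancel the $\lambda^2$-terms; both arguments rest on the same scaling identities and the same use of the two hypotheses.
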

\begin{proof}
The proof is simple, using the fact that
\[
\frac{\partial}{\partial\lambda}S(v_{\lambda})=\frac{1}{\lambda}P(v_{\lambda})=\lambda P(v)=0.
\]
Hence, we obtain the desired result.
\end{proof}

Now, consider the minimization problems
\begin{align}
d_{\mathcal{M}} &:=\inf\left\{S(v):v\in \mathcal{M}\right\},\\
m &:=\inf\left\{S(v),v\in H^1(\R^{+})\setminus 0,S'(v)=0\right\},\label{eq minimizer m}
\end{align}
where 
\[
\mathcal{M}=\left\{v\in H^1(\R^{+})\setminus 0, P(v)=0, K(v)\leq 0\right\}.
\]
By classical arguments, we can prove the following property.
\begin{proposition}\label{property of m}
Let $m$ be defined as above. Then, we have
\[
m= \inf\left\{S(v): v\in H^1(\R^{+})\setminus 0,K(v)=0\right\}.
\]
\end{proposition}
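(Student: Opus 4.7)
The plan is to verify both inequalities $m \leq d$ and $m \geq d$, where $d := \inf\{S(v) : v \in H^1(\R^+)\setminus\{0\},\ K(v) = 0\}$. The engine is the Nehari pairing
\[
\langle S'(v), v\rangle = K(v),
\]
which follows from the decomposition $S = \tfrac{1}{2}L - \tfrac{1}{6}N$ together with homogeneity: $L$ is quadratic, so $\langle L'(v), v\rangle = 2L(v)$, and $N$ is $6$-homogeneous, so $\langle N'(v), v\rangle = 6N(v)$. Here $S'$ is the $H^1(\R^+)^\ast$-derivative; crucially, the boundary term $\tfrac{\alpha}{2}|v(0)|^2$ contributes $\alpha v(0)\overline{w(0)}$ to the variation in direction $w$, which after integration by parts produces exactly the Robin condition $v_x(0) = \alpha v(0)$ in the Euler--Lagrange equation.

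For $m \geq d$, if $v \in H^1(\R^+)\setminus\{0\}$ satisfies $S'(v) = 0$, then pairing with $v$ gives $K(v) = 0$, so $v$ is admissible in the infimum defining $d$, whence $S(v) \geq d$. Taking the infimum over all nontrivial critical points yields $m \geq d$. For the reverse inequality, I invoke Proposition \ref{variation characteristic problem}: the infimum $d = d(\omega)$ is attained by the explicit ground-state profile $\varphi$, and $\varphi$ solves the standing-wave equation \eqref{eq sdw of robbin condition}, which is precisely $S'(\varphi) = 0$ in the weak sense on $H^1(\R^+)$ (with the Robin condition arising automatically from the boundary term). Hence $\varphi$ is a nontrivial critical point with $S(\varphi) = d$, so $m \leq d$; combining the two directions gives $m = d$.

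The only mildly delicate point is checking that the weak formulation of \eqref{eq sdw of robbin condition}, with Robin boundary condition, coincides with the Euler--Lagrange equation for $S$ on the full space $H^1(\R^+)$ (as opposed to, say, a Dirichlet-constrained subspace); this is a direct computation of the variation of $\tfrac{\alpha}{2}|v(0)|^2$ combined with integration by parts against an arbitrary test function $w \in H^1(\R^+)$ which need not vanish at $0$. Once this identification of critical points with weak solutions of the Robin problem is in hand, no compactness or concentration-compactness is required here, because Proposition \ref{variation characteristic problem} already supplies the minimizer realizing $d$.
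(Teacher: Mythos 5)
Your proof is correct and supplies exactly the ``classical argument'' the paper invokes without writing out: the Nehari identity $\langle S'(v),v\rangle=K(v)$ (from the $2$- and $6$-homogeneity of $L$ and $N$, the boundary term included) gives $m\geq d(\omega)$, while the ground state $\varphi$, being a nontrivial critical point of $S$ on $H^1(\R^{+})$ with $S_{\omega}(\varphi)=d(\omega)$, gives $m\leq d(\omega)$. The only citation nuance is that Proposition \ref{variation characteristic problem} by itself only characterizes minimizers assuming they exist; the attainment $S_{\omega}(\varphi)=d(\omega)$, $K_{\omega}(\varphi)=0$ follows from combining Propositions \ref{new proposition 1} and \ref{proposition vartiational 1} via the even extension, which is precisely what the paper uses when it asserts $d(\omega)=S_{\omega}(\varphi)$, so this is a presentational point rather than a gap.
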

We have the following relation between the minimization problems $m$ and $d_{\mathcal{M}}$.
\begin{lemma}\label{lemma relation m and dM}
Let $m$ and $d_{\mathcal{M}}$ be defined as above. We have
\[
m=d_{\mathcal{M}}.
\]
\end{lemma}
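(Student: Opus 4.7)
The plan is to prove the two inequalities $d_{\mathcal{M}} \leq m$ and $m \leq d_{\mathcal{M}}$ separately. For the easy direction, I note that by Proposition \ref{property of m} we have $m = \inf\{S(v) : v \in H^1(\R^{+})\setminus 0,\ K(v) = 0\}$, which by Proposition \ref{variation characteristic problem} is realised by the profile $\varphi$, so $m = S(\varphi)$. Since $\varphi$ solves the Euler--Lagrange equation \eqref{eq sdw of robbin condition} with the Robin boundary condition, testing the equation against $\varphi$ gives the Nehari identity $K(\varphi) = 0$, while the scaling identity $P(\varphi) = \frac{d}{d\lambda}S(\varphi_\lambda)|_{\lambda = 1} = 0$ follows directly from $S'(\varphi) = 0$. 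Hence $\varphi \in \mathcal{M}$, and therefore $d_{\mathcal{M}} \leq S(\varphi) = m$.

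For the reverse inequality $m \leq d_{\mathcal{M}}$, I would fix an arbitrary $v \in \mathcal{M}$ and construct a rescaling $v_{\lambda^{*}}$ satisfying $K(v_{\lambda^{*}}) = 0$ together with $S(v_{\lambda^{*}}) \leq S(v)$; this would yield $m \leq S(v_{\lambda^{*}}) \leq S(v)$ by Proposition \ref{property of m}, and taking the infimum over $\mathcal{M}$ would give the claim. The existence of $\lambda^{*} \in (0,1]$ is an intermediate value argument: the explicit expression
\[
K(v_\lambda) = \lambda^{2}\norm{v_x}^{2}_{L^{2}(\R^{+})} + \omega\norm{v}^{2}_{L^{2}(\R^{+})} + \lambda\alpha|v(0)|^{2} - \tfrac{3\lambda^{2}}{16}\norm{v}^{6}_{L^{6}(\R^{+})}
\]
tends to $\omega\norm{v}^{2}_{L^{2}(\R^{+})} > 0$ as $\lambda \to 0^{+}$, whereas $K(v_{1}) = K(v) \leq 0$ by assumption.

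The key step is then the comparison $S(v_{\lambda^{*}}) \leq S(v)$, which splits according to the value of the trace $v(0)$. If $v(0) = 0$, Lemma \ref{property of function under scaling 2} directly gives $S(v_\lambda) = S(v)$ for every $\lambda > 0$, so in particular at $\lambda^{*}$. If $v(0) \neq 0$, Lemma \ref{property of function under scaling 1} applied to $v$---where $P(v) = 0$ forces $\lambda_{0} = 1$ by item (ii)---shows via items (iv) and (v) that $\lambda = 1$ is the strict global maximum of $\lambda \mapsto S(v_\lambda)$; equivalently, one may compute directly the strict concavity $\frac{d^{2}}{d\lambda^{2}}S(v_\lambda) = -\frac{\alpha}{2}|v(0)|^{2} < 0$ together with the vanishing of the first derivative at $\lambda = 1$.

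I expect the main obstacle to be the bookkeeping of the two trace cases $v(0) = 0$ and $v(0) \neq 0$, since the strict-concavity mechanism underlying the Pohozaev rescaling degenerates in the former case and must be replaced by the scale invariance of Lemma \ref{property of function under scaling 2}; once this split is handled, the remainder of the argument is purely algebraic and the monotonicity of $\lambda \mapsto S(v_\lambda)$ delivers the comparison that closes the proof.
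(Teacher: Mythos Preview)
Your proposal is correct and follows essentially the same approach as the paper: the paper also proves $m \geq d_{\mathcal{M}}$ by observing that a minimizer $\varphi$ satisfies $K(\varphi)=P(\varphi)=0$ (hence lies in $\mathcal{M}$), and proves $d_{\mathcal{M}} \geq m$ by the same intermediate-value rescaling to a point where $K(v_{\lambda_1})=0$, splitting on $v(0)=0$ versus $v(0)\neq 0$ exactly as you do. The only cosmetic differences are that the paper separates the subcases $K(v)=0$ and $K(v)<0$ at the outset (whereas you absorb them into a single $\lambda^{*}\in(0,1]$), and the paper invokes Lemma~\ref{property of function under scaling 1}(iv) for the monotonicity rather than your direct second-derivative computation.
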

\begin{proof}
Let $\mathcal{G}$ be the set of all minimizers of \eqref{eq minimizer m}. If $\varphi \in \mathcal{G}$ then $S'(\varphi)=0$. By the definition of $S$, $P$, $K$ we have $P(\varphi)=0$ and $K(\varphi)=0$. Hence, $\varphi\in \mathcal{M}$, this implies $S(\varphi) \geq d_{\mathcal{M}}$. Thus, $m \geq d_{\mathcal{M}}$. 

Conversely, let $v \in \mathcal{M}$. If $K(v)=0$ then $S(v) \geq m$, using Proposition \ref{property of m}. Otherwise, $K(v)<0$. Using the scaling $v_{\lambda}(x)=\lambda^{\frac{1}{2}}v(\lambda x)$, we have
\[
K(v_{\lambda})=\lambda^2\norm{v_x}^2_{L^2(\R^{+})}-\frac{3\lambda^2}{16}\norm{v}^6_{L^6(\R^{+})}+\omega\norm{v}^2_{L^2(\R^{+})}+\frac{\alpha\lambda}{2}|v(0)|^2 \rightarrow \omega\norm{v}^2_{L^2(\R^{+})}>0,
\]
as $\lambda \rightarrow 0$. Hence, $K(v_{\lambda})>0$ as $\lambda>0$ is small enough. Thus, there exists $\lambda_1 \in (0,1)$ such that $K(v_{\lambda_1})=0$. Using Proposition \ref{property of m}, $S(v_{\lambda_1}) \geq m$. We consider two cases. First, if $v(0)=0$ then using Lemma \ref{property of function under scaling 2}, we have $S(v)=S(v_{\lambda_1})\geq m$. Second, if $v(0)\neq 0$ then using Lemma \ref{property of function under scaling 1}, we have $S(v)\geq S(v_{\lambda_1})\geq m$. In any case, $S(v) \geq m$. This implies $d_{\mathcal{M}} \geq m$, and completes the proof.
\end{proof}

Define
\[
\mathcal{V}:=\left\{v \in H^1(\R^{+})\setminus \left\{0\right\}:K(v)<0,P(v)<0,S(v)<m\right\}.
\]
We have the following important lemma.
\begin{lemma}\label{invariant under flow of V}
If $v_0 \in \mathcal{V}$ then the solution $v$ of \eqref{eq first} associated with $v_0$ satisfies $v(t) \in \mathcal{V}$ for all $t$ in the time of existence. 
\end{lemma}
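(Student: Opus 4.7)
The plan is the standard continuity/invariance argument for a set cut out by two strict constraints under a flow that conserves $S_\omega$. The key ingredients are the conservation laws from Assumption \ref{assumptionB}, the two characterizations $m=\inf\{S(v):K(v)=0,\,v\neq 0\}$ (Proposition \ref{property of m}) and $m=d_{\mathcal{M}}$ (Lemma \ref{lemma relation m and dM}), and the continuity of $t\mapsto v(t)$ in $H^1(\R^+)$.

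First I would note that $S_\omega(v)=E(v)+\frac{\omega}{2}\|v\|^2_{L^2(\R^+)}$, so the conservation of mass and energy gives $S(v(t))=S(v_0)<m$ throughout the interval of existence. Mass conservation also forces $\|v(t)\|_{L^2(\R^+)}=\|v_0\|_{L^2(\R^+)}>0$ (since $v_0\in H^1(\R^+)\setminus\{0\}$ implies $\|v_0\|_{L^2(\R^+)}>0$), so in particular $v(t)\neq 0$ for all such $t$.

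Next, I would argue by contradiction. Assume $v(t)\notin\mathcal{V}$ at some time. Since the trace $u\mapsto u(0)$ and the $L^6$-norm are continuous on $H^1(\R^+)$, and $t\mapsto v(t)$ is $H^1$-continuous, the real-valued maps $t\mapsto K(v(t))$ and $t\mapsto P(v(t))$ are continuous. Both start strictly negative at $t=0$, and the $S<m$ constraint is already guaranteed, so by the intermediate value theorem there is a first time $t^{*}>0$ at which $\max(K(v(t^{*})),P(v(t^{*})))=0$. If $K(v(t^{*}))=0$, then Proposition \ref{property of m} gives $S(v(t^{*}))\geq m$, contradicting $S(v(t^{*}))=S(v_0)<m$. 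Otherwise $K(v(t^{*}))<0$ and $P(v(t^{*}))=0$, so $v(t^{*})\in\mathcal{M}$, and Lemma \ref{lemma relation m and dM} yields $S(v(t^{*}))\geq d_{\mathcal{M}}=m$, the same contradiction. In either case $v(t)\in\mathcal{V}$ for all $t$ in the interval of existence.

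The only subtle point is to verify that $S_\omega$ really is conserved under the flow, which is immediate from the identity $S_\omega=E+\omega M$ combined with Assumption \ref{assumptionB}. Once that is in hand, there is no compactness or variational work to do; the proof is just careful bookkeeping at the first-exit time, exploiting the fact that $m$ has \emph{two} variational characterizations, one of which precisely matches the boundary $\{K=0\}$ and the other the boundary $\{P=0,\,K\leq0\}$.
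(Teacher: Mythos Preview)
Your proof is correct and takes essentially the same approach as the paper: conservation of $S_\omega=E+\omega M$ keeps $S(v(t))<m$, and then a continuity/intermediate-value argument on $K$ and $P$, combined with the two variational characterizations of $m$ (Proposition \ref{property of m} for the boundary $K=0$ and Lemma \ref{lemma relation m and dM} for the boundary $P=0,\ K\leq 0$), rules out exit from $\mathcal{V}$. The only cosmetic difference is that the paper treats the two constraints sequentially (first showing $K(v(t))<0$ for all $t$, then using this to show $P(v(t))<0$), whereas you package them into a single first-exit-time argument for $\max(K,P)$; your extra remark that mass conservation guarantees $v(t)\neq 0$ is a nice point the paper leaves implicit.
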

\begin{proof}
Since $S(v_0)<0$, by conservation of the energy and the mass we have
\begin{equation}\label{eq function S}
S(v)=E(v)+\omega M(v)=E(v_0)+\omega M(v_0)=S(v_0)<m.
\end{equation}
If there exists $t_0>0$ such that $K(v(t_0)) \geq 0$ then by continuity of $K$ and $v$, there exists $t_1 \in (0,t_0]$ such that $K(v(t_1))=0$. This implies $S(v(t_1)) \geq m$, using Proposition \ref{property of m}. This contradicts \eqref{eq function S}. Hence, $K(v(t))<0$ for all $t$ in the time of existence of $v$. Now, we prove $P(v(t))<0$ for all $t$ in the time of existence of $v$. Assume that there exists $t_2>0$ such that $P(v(t_2))\geq 0$, then, there exists $t_3 \in (0,t_2]$ such that $P(v(t_3))=0$. Using the previous lemma, $S(v(t_3)) \geq m$, which contradicts \eqref{eq function S}. This completes the proof.  
\end{proof}

Using the above lemma, we have the following property of solutions of \eqref{eq first} when the initial data lies on $\mathcal{V}$.

\begin{lemma}\label{lm3.16}
Let $v_0 \in \mathcal{V}$, $v$ be the corresponding solution of \eqref{eq first} in $(T_{min},T_{max})$. There exists $\delta>0$ independent of $t$  such that $P(v(t)) <-\delta$, for all $t \in (T_{min},T_{max})$. 
\end{lemma}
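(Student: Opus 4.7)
The plan is to exploit the scaling $v_\lambda(x) := \lambda^{1/2} v(\lambda x)$, observing that $\lambda \mapsto S(v_\lambda)$ is an explicit strictly concave parabola for each $v \in \mathcal{V}$. By comparing $S(v)$ with its value at a dilation lying on $\{K = 0\}$, where Proposition \ref{property of m} forces $S \geq m$, and then invoking conservation of the action, one obtains $P(v(t))$ uniformly negative.

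Fix $t \in (T_{min},T_{max})$ and set $v := v(t)$; by Lemma \ref{invariant under flow of V}, $v \in \mathcal{V}$. A direct computation shows
\[
K(v_\lambda) = \lambda^2\bigl(\|v_x\|_{L^2(\R^+)}^2 - \tfrac{3}{16}\|v\|_{L^6(\R^+)}^6\bigr) + \omega\|v\|_{L^2(\R^+)}^2 + \lambda\alpha|v(0)|^2
\]
tends to $\omega\|v\|_{L^2(\R^+)}^2 > 0$ as $\lambda \to 0^+$ while $K(v_1) = K(v) < 0$, so by continuity there exists $\lambda^* = \lambda^*(t) \in (0,1)$ with $K(v_{\lambda^*}) = 0$; Proposition \ref{property of m} then yields $S(v_{\lambda^*}) \geq m$. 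Next, $h(\lambda) := S(v_\lambda)$ is a quadratic polynomial in $\lambda$ with constant second derivative
\[
h''(\lambda) = \|v_x\|_{L^2(\R^+)}^2 - \tfrac{1}{16}\|v\|_{L^6(\R^+)}^6 = P(v) - \tfrac{\alpha}{2}|v(0)|^2 < 0,
\]
the strict negativity coming from $P(v) < 0$ (since $v \in \mathcal{V}$) together with $\alpha > 0$. The tangent-line inequality, which is exact for a quadratic, reads
\[
S(v_{\lambda^*}) \leq S(v) + (\lambda^* - 1) P(v),
\]
and combining this with $S(v_{\lambda^*}) \geq m$ gives $m - S(v) \leq (1-\lambda^*)(-P(v))$. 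Since $1 - \lambda^* \in (0,1)$, this forces $-P(v) > m - S(v)$. Conservation of $S$ from Assumption \ref{assumptionB} yields $S(v(t)) = S(v_0) < m$, so setting $\delta := \tfrac{1}{2}(m - S(v_0)) > 0$ gives $P(v(t)) < -\delta$ uniformly in $t$.

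The only delicate point is producing the \emph{uniform} bound: a priori $\lambda^* = \lambda^*(t)$ depends on $t$, and one might worry that $1 - \lambda^*(t)$ could approach $0$ as $t$ varies, degrading the estimate. This obstruction disappears for free because $1 - \lambda^* < 1$ automatically, so $1/(1-\lambda^*) > 1$ already absorbs the factor without any quantitative lower bound on $1 - \lambda^*$ being required. Thus the argument needs only the strict concavity of $h$ (from $\alpha > 0$) and the variational characterization of $m$ from Proposition \ref{property of m}; no compactness or additional spectral information is invoked.
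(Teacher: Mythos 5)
Your argument is correct, and it is the same basic mechanism as the paper's proof (rescale $v(t)$ inward to a constraint set on which $S\geq m$, then use concavity of $\lambda\mapsto S(v_\lambda)$ together with conservation of $S$ to get $P(v(t))<S(v_0)-m$), but your selection of the rescaling parameter is genuinely simpler. The paper first uses Lemma \ref{property of function under scaling 1}(i) to find $\lambda_0\in(0,1]$ with $P(u_{\lambda_0})=0$, then splits into the cases $K(u_{\lambda_0})\leq 0$ (where it needs $m=d_{\mathcal M}$ from Lemma \ref{lemma relation m and dM}) and $K(u_{\lambda_0})>0$ (where it moves further to a zero of $K$ and uses Proposition \ref{property of m}). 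You instead dilate directly to the Nehari constraint: since $K(v_\lambda)\to\omega\norm{v}^2_{L^2(\R^{+})}>0$ as $\lambda\to 0^{+}$ (the mass term does not scale) and $K(v)<0$, a zero $\lambda^{*}\in(0,1)$ of $\lambda\mapsto K(v_\lambda)$ always exists, and Proposition \ref{property of m} alone gives $S(v_{\lambda^{*}})\geq m$; the tangent-line inequality for the quadratic $\lambda\mapsto S(v_\lambda)$, whose second derivative $P(v)-\frac{\alpha}{2}|v(0)|^2$ is negative because $P(v)<0$ and $\alpha>0$, then yields $m-S(v)\leq(1-\lambda^{*})(-P(v))\leq -P(v)$, exactly the paper's final estimate. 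What this buys: no case distinction, no use of $d_{\mathcal M}=m$, and—more substantively—no appeal to Lemma \ref{property of function under scaling 1}(i), whose hypothesis $v(0)\neq 0$ the paper's proof does not verify (if $v(t)(0)=0$ and $P(v(t))<0$, then $P(v(t)_\lambda)=\lambda^2P(v(t))$ never vanishes, so the paper's first dilation step would need a separate argument); your route is insensitive to whether $v(t)(0)$ vanishes. Your closing remark on uniformity is also correct: since $m-S(v(t))=m-S(v_0)>0$ and $0<1-\lambda^{*}<1$, no lower bound on $1-\lambda^{*}(t)$ is needed, and $\delta=\frac{1}{2}(m-S(v_0))$ works.
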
  
\begin{proof}
Let $t \in (T_{min},T_{max})$, $u=v(t)$ and $u_{\lambda}(x)=\lambda^{\frac{1}{2}}u(\lambda x)$. Using Lemma \ref{property of function under scaling 1}, there exists $\lambda_0 \in (0,1)$ such that $P(u_{\lambda_0})=0$. If $K(u_{\lambda_0})\leq 0$ then we keep $\lambda_0$. Otherwise,  $K(u_{\lambda_0})>0$, then, there exists $\widetilde{\lambda}_0 \in (\lambda_0,1)$ such that $K(u_{\widetilde{\lambda}_0})=0$. We replace $\lambda_0$ by $\widetilde{\lambda}_0$. In any case, we have
\begin{equation}\label{eq bounded below 1}
S(u_{\lambda_0})\geq m.
\end{equation}
By (v) of Proposition \ref{property of function under scaling 1} we have
\[
S(u)-S(u_{\lambda_0}) \geq (1-\lambda_0) \frac{\partial}{\partial\lambda}S(u_{\lambda})\vert_{\lambda=1}=(1-\lambda_0)P(u).
\]
In addition $P(u)<0$, we obtain
\begin{equation}\label{eq bounded below of S}
S(u)-S(u_{\lambda_0}) \geq (1-\lambda_0)P(u)>P(u).
\end{equation}
Combined \eqref{eq bounded below 1} and \eqref{eq bounded below of S}, we obtain
\[
S(v_0)-m=S(v(t))-m=S(u)-m\geq S(u)-S(u_{\lambda_0}) > P(u)=P(v(t)).
\] 
Setting 
\[
-\delta:=S(v_0)-m,
\]
we obtain the desired result.
\end{proof}
Using the previous lemma, if the initial data lies on $\mathcal{V}$ and satisfies a weight condition then the associated solution blows up in finite time on $H^1(\R^{+})$. More precisely, we have the following result.  

\begin{proposition}\label{3.17}
Let $\varphi \in \mathcal{V}$ such that $|x|\varphi \in L^2(\R^{+})$. Then the corresponding solution $v$ of \eqref{eq first} blows up in finite time on $H^1(\R^{+})$.
\end{proposition}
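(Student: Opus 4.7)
The plan is to adapt the virial argument from the proof of Theorem \ref{thm blow up}, the new ingredient being the uniform negativity of $P(v(t))$ granted by Lemma \ref{lm3.16}. As in Section \ref{sectionblowup}, I would pass to the Gauge transformed variable $u(t,x) = v(t,x)\exp\left(-\frac{i}{4}\int_x^{+\infty}|v(t,y)|^2\,dy\right)$, which solves \eqref{eq 3.1}, and introduce
\[
I(t) = \int_0^{\infty} x^2 |v(t,x)|^2 \, dx, \qquad J(t) = \Im \int_0^{\infty} x\, u_x(t,x)\, \overline{u(t,x)}\,dx.
\]
The computations already performed in Section \ref{sectionblowup} yield
\[
\partial_t I(t) = 4J(t) - \int_0^{\infty} x|u|^4\,dx, \qquad \partial_t J(t) = 2\norm{v_x}^2_{L^2(\R^{+})} - \tfrac{1}{8}\norm{v}^6_{L^6(\R^{+})} + \alpha|v(0)|^2.
\]
A direct comparison with the definition of $P$ shows that the right-hand side of the second identity is exactly $2P(v(t))$.

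By Lemma \ref{invariant under flow of V}, the set $\mathcal{V}$ is invariant under the flow, so $v(t)\in \mathcal{V}$ for every $t$ in the maximal interval of existence. Lemma \ref{lm3.16} then furnishes a constant $\delta>0$ independent of $t$ such that $P(v(t))<-\delta$, hence $\partial_t J(t) < -2\delta$. Integrating once gives $J(t) \leq J(0) - 2\delta t$; plugging this into the identity for $\partial_t I$ and using $\int_0^{\infty} x|u|^4\,dx \geq 0$ yields
\[
I(t) \leq I(0) + 4J(0)\,t - 4\delta\, t^2.
\]
The hypothesis $|x|\varphi \in L^2(\R^{+})$ ensures that $I(0)$ and $J(0)$ are finite, and the right-hand side above is a downward parabola in $t$, so it vanishes at some finite $T_\ast>0$. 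Since $I\geq 0$, this forces $I(T_\ast)=0$. Combining with the Cauchy--Schwarz bound
\[
\norm{\varphi}^2_{L^2(\R^{+})} = \norm{v(t)}^2_{L^2(\R^{+})} \leq 2\sqrt{I(t)}\,\norm{v_x(t)}_{L^2(\R^{+})},
\]
(which follows from mass conservation and an integration by parts, as in Section \ref{sectionblowup}), we obtain $\norm{v_x(t)}_{L^2(\R^{+})} \to +\infty$ as $t \to T_\ast^{-}$, so $v$ blows up in finite time in $H^1(\R^{+})$.

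The main technical point to handle carefully is the identity $\partial_t J(t)=2P(v(t))$ at the $H^1$ regularity level, in particular the boundary term produced by integration by parts, which interacts with the nonlinear boundary condition $u_x(0)=\alpha u(0)+\tfrac{i}{4}|u(0)|^2u(0)$ satisfied by the Gauge transformed variable. This is precisely the calculation already carried out in the proof of Theorem \ref{thm blow up} under Assumption \ref{assumptionB}, and the same approximation by classical solutions works verbatim here. Once this identity is in hand, the remainder of the argument is a mechanical combination of Lemmas \ref{invariant under flow of V} and \ref{lm3.16} with the standard virial concavity scheme.
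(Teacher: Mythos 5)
Your proposal is correct and follows essentially the same route as the paper: a virial argument in the Gauge-transformed variable, with Lemma \ref{lm3.16} supplying the uniform bound $P(v(t))<-\delta$ so that $\partial_t J=2P(v(t))$ forces a concave-in-time quadratic upper bound on $\norm{xv(t)}^2_{L^2(\R^{+})}$ and hence finite-time blow up (the paper differs only in normalizing $J$ with an extra factor $4$ and in invoking the blow-up alternative rather than the Cauchy--Schwarz mass-concentration step, which is immaterial).
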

\begin{proof}
By Lemma \ref{lm3.16}, there exists $\delta>0$ such that $P(v(t))<-\delta$ for $t \in (T_{min},T_{max})$. Remember that 
\begin{align}\label{eq derivative in time}
\frac{\partial}{\partial t}\norm{xv(t)}^2_{L^2(\R^{+})}&=J(t)-\int_{\R^{+}}x|v|^4\, dx,
\end{align}
where $J(t)$ satisfies
\[
\partial_tJ(t)=4\left(2\norm{v_x}^2_{L^2(\R^{+})}-\frac{1}{8}\norm{v}^6_{L^6(\R^{+})}+\alpha|v(0)|^2\right)=8(P(v(t))) < -8\delta.
\]
This implies that 
\[
J(t)=J(0)+8\int_0^tP(v(s))\, ds < J(0)-8\delta t.
\]
Hence, from \eqref{eq derivative in time}, we have
\begin{align*}
\norm{xv(t)}^2_{L^2(\R^{+})} &= \norm{xv(0)}^2_{L^2(\R^{+})}+\int_0^t J(s)\,ds-\int_0^t\int_{\R^{+}} x|v|^4\,dx\,ds\\
&\leq \norm{xv(0)}^2_{L^2(\R^{+})}+\int_0^t(J(0)-8\delta s)\, ds\\
&\leq \norm{xv(0)}^2_{L^2(\R^{+})}+J(0)t-4\delta t^2.
\end{align*}
Thus, for $t$ sufficiently large, there is a contradiction with $\norm{xv}_{L^2(\R^{+})}\geq 0$. Hence, $T_{max}<\infty$ and $T_{min}>-\infty$. By the blow up alternative, we have 
\[
\mathop{\lim}\limits_{t \rightarrow T_{max}}\norm{v_x}_{L^2(\R^{+})}=\mathop{\lim}\limits_{t \rightarrow T_{min}}\norm{v_x}_{L^2(\R^{+})}=\infty.
\]
This completes the proof.
\end{proof}
\begin{proof}[Proof of Theorem \ref{stable and instale of SDW for eq 2} (2).]
Using Proposition \ref{3.17}, we need to construct a sequence $(\varphi_n) \subset \mathcal{V}$ such that $\varphi_n$ converges to $\varphi$ in $H^1(\R^{+})$. Define 
\[
\varphi_{\lambda}(x)=\lambda^{\frac{1}{2}}\varphi(\lambda x).
\] 
We have 
\[
S(\varphi)=m, \quad P(\varphi)=K(\varphi)=0, \quad \varphi(0) \neq 0. 
\]
By (iv) of Proposition \ref{property of function under scaling 1},
\[
S(\varphi_{\lambda})<m \text{ for all } \lambda>0.
\]
In the addition, 
\[
P(\varphi_{\lambda})<0 \text{ for all } \lambda>1.
\]
Moreover, 
\begin{align*}
\frac{\partial}{\partial\lambda}K(\varphi_{\lambda})&=2\lambda\left(\norm{\varphi_x}^2_{L^2(\R^{+})}-\frac{3}{16}\norm{\varphi}^6_{L^6(\R^{+})}\right)+\alpha|\varphi(0)|^2\\
&=2\lambda(K(\varphi)-\omega\norm{\varphi}^2_{L^2(\R^{+})}-\alpha|\varphi(0)|^2)+\alpha|\varphi(0)|^2\\
&=-2\omega\lambda\norm{\varphi}^2_{L^2(\R^{+})}-\alpha(2\lambda-1)|\varphi(0)|^2\\
&<0,
\end{align*}
when $\lambda>1$. Thus, $K(\varphi_{\lambda})<K(\varphi)=0$ when $\lambda>1$. This implies $\varphi_{\lambda} \in \mathcal{V}$ when $\lambda>1$. Let $\lambda_n>1$ such that $\lambda_n \rightarrow 1$ as $n \rightarrow \infty$. Define, for $n \in \N^{*}$
\[
\varphi_n=\varphi_{\lambda_n},
\]
then, the sequence $(\varphi_n)$ satisfies the desired property. This completes the proof of Theorem \ref{stable and instale of SDW for eq 2}.
\end{proof}
 \bibliographystyle{abbrv}
 \bibliography{repairlibraryforpertubationschrodingerderivative}

\end{document}